\newtheorem{thm}{Theorem}[section]
\newtheorem{lemma}[thm]{Lemma}
\newtheorem*{thm*}{Theorem}
\newtheorem{prop}[thm]{Proposition}
\newtheorem{cor}[thm]{Corollary}
\newtheorem{?}[thm]{Question}
\theoremstyle{definition}
 \newtheorem{definition}[thm]{Definition}
\newcommand{\N}{\omega}
\newcommand{\A}{\mathcal{A}}
\newcommand{\s}{\sigma}
\newcommand{\nin}{\notin}
\newcommand{\twow}{2^{<\omega}}
\newcommand{\conc}{{}^{\smallfrown}}
\newcommand{\dom}{\text{dom}}
\newcommand{\U}{\mathbb{U}}
\newcommand{\phie}{\phi_e}
\newcommand{\phies}{\phi_{e,s}}
\newcommand{\deltwo}{\Delta^0_2}
\newcommand{\LK}{\mathcal{LK}}
\newcommand{\KT}{\mathcal{KT}}
\newcommand{\emptystr}{\langle\rangle}
\newcommand{\M}{\mathcal{M}}
\newcommand{\Tis}{T_{i,s}}
\newcommand{\Sei}{S^e_i}
\newcommand{\lleq}{\kern.2em\raisebox{-.1em}{$\leqslant$}\kern-.4em{<}}
\renewcommand{\restriction}{\mathord{\upharpoonright}}
\newcommand\restr[2]{{
  \left.\kern-\nulldelimiterspace 
  #1 
  \restriction 
  \right._{#2} 
  }}
\title{On Reals with $\deltwo$-Bounded Complexity and Compressive Power}
\author{Ian Herbert\footnote{Department of Mathematics, National University of Singapore, Singapore; iherbert@nus.edu.sg}}
\date{}
\begin{document}

\maketitle

\begin{abstract} 
The (prefix-free) Kolmogorov complexity of a finite binary string is the length of the shortest description of the string. This gives rise to some `standard' lowness notions for reals: A is K-trivial if its initial segments have the lowest possible complexity and A is low for K if using A as an oracle does not decrease the complexity of strings by more than a constant factor. We weaken these notions by requiring the defining inequalities to hold up only up to all $\deltwo$ orders, and call the new notions \emph{$\deltwo$-bounded $K$-trivial} and \emph{$\deltwo$-bounded low for $K$}. Several of the `nice' properties of $K$-triviality are lost with this weakening. For instance, the new weaker definitions both give uncountable set of reals. In this paper we show that the weaker definitions are no longer equivalent, and that the $\deltwo$-bounded $K$-trivials are cofinal in the Turing degrees. We then compare them to other previously studied weakenings, namely \emph{infinitely-often $K$-triviality} and \emph{weak lowness for $K$} (in each, the defining inequality must hold up to a constant, but only for infinitely many inputs). We show that $\deltwo$-bounded $K$-trivial implies infinitely-often $K$-trivial, but no implication holds between $\deltwo$-bounded low for $K$ and weakly low for $K$. 

\end{abstract}
\section{Introduction}


The prefix-free Kolmogorov complexity, $K(\s)$, of a binary string $\s$ is the length of the shortest self-delimiting program (in a given language) whose output is $\s$. We can extend this to a notion on reals by examining the complexities of all finite initial segments of their binary expansions. We say a real is \emph{Martin-L\"{o}f random} if the complexities of its initial segments are as high as possible, i.e., up to an additive constant $c$ we have for all $n$, $K(\restr{A}{n}) \geq n - c$, where $\restr{A}{n}$ is the initial segment of $A$ of length $n$. In this way we capture a notion of randomness that coincides with being difficult to describe. The Martin-L\"{o}f random reals are one of the cornerstones of the field of Algorithmic Randomness. On the other end of the spectrum, we have reals whose initial segment complexity is as low as possible. A string of length $n$ can always be used as a description of the number $n$, so the lowest complexity we can achieve is $K(n)$. We say real $A$ is $K$\emph{-trivial} if up to an additive constant $c$ we have for all $n$, $K( \restr{A}{n}) \leq K(n) +c$ (for an $n\in\N$, we use $K(n)$ to mean the complexity of a string of $n$ zeros). The $K$-trivials are another set of reals that are well-studied and of great interest in this field. 

Another way of comparing reals using Kolmogorov complexity is to examine their compressive power. By allowing programs to have oracle access to reals, we get a notion of relativized Kolmogorov complexity; the length of the shortest description of $\s$ that can use $A$ as an oracle. We can then compare the plain complexities of strings with their complexities relative to a given real to get some idea of the additional power that the real is providing to compression. Some reals, for example Martin-L\"{o}f randoms, have high compressive power, since used as an oracle they can give very short descriptions of their own initial segments, which are impossible to compress by an oracle-free program. However, we also get a concept of `lowness' for reals for this measure. We say a real is \emph{low for} $K$ if up to an additive constant $c$, for all finite binary strings $\s$, $K(\s) \leq K^{A}(\s)+c$, that is, $A$ provides no more than a constant amount of additional compression to any string. It is a remarkable fact due to Nies \cite{lowforkktrivial} that lowness for $K$ coincides exactly with $K$-triviality; having minimal complexity is the same as having minimal compressive power. However, intensionally the definitions are quite different and as we weaken the definitions slightly the notions come apart. We formalize some notation to be used throughout this paper. 

We use $\omega$ to denote the least countable ordinal, identified with the set of natural numbers. We use $\twow$ to denote the set of finite binary strings and  $2^{\omega}$ for the set of infinite binary sequences, identified with the binary expansions of reals. We use the symbol `$\conc$' to denote the operation of concatenation on $\twow$, omitting it where there will be no confusion, and the symbol `$\prec$' to denote the initial segment relation on $\twow \times \twow$ and $\twow \times 2^{\omega}$. We denote the restriction of an element $A \in 2^{\omega}$ to its finite initial segment of length $n$ by $\restr{A}{n}$. In contexts that involve both finite binary strings and natural numbers, we will use $\langle \s \rangle$ to denote the \emph{string} $\s$ as opposed to the natural number with decimal expansion $\s$ (i.e., $\langle 10 \rangle$ is the binary strings of $1$ followed by $0$, while $10$ is the natural number `ten'), unless this can be omitted without confusion. By a \emph{tree} we mean a subset of $\twow$ that is closed downwards under $\prec$. For such a tree $T$, we use $[T]$ to denote the set of infinite paths through $T$, i.e., $[T]=\{A\in 2^{\omega}: \forall n \ \restr{A}{n} \in T\}$. As stated above, we use $n$ to denote the string consisting of $n$ zeros. For computations or processes that may or may not converge, we use $\downarrow$ to denote convergence and $\uparrow$ to denote divergence. We use the symbol `$\leq^+$' to denote that an inequality holds up to an additive constant. We will use standard terminology and definitions for recursion theoretic concepts as given in for example \cite{soare} or \cite{}.

By a \emph{machine} we mean a partial recursive function $\M: \twow \rightarrow \twow$. A machine $\M$ is \emph{prefix-free} if for any $\sigma \prec \tau$ in $\twow$, if $\M(\sigma)\downarrow$ then $\M(\tau)\uparrow$. For a prefix-free machine $\M$, the \emph{prefix-free Kolmogorov complexity relative to} $\M$ of a string $\s$ is $\min\{|\tau|:\M(\tau)=\s\}$ and is denoted $K_{\M}(\s)$. Solomonoff, Kolmogorov, and Chaitin each independently showed the existence of universal prefix-free machines, that is machines $\M_U$ such that for any other prefix-free machine $\M$, for all $\sigma \in \twow$, $K_{\M_U}(\sigma) \leq^{+} K_{\M}(\sigma)$. We fix some such universal prefix-free machine and denote it $\U$ and the associated Kolmogorov complexity simply $K$. For many of our proofs we will need to construct our own machines, and we will need the following result. A \emph{Kraft-Chaitin set} is a recursively enumerable subset $W$ of $\twow \times \omega$ such that $\sum\limits_{(\s,n)\in W}2^{-n} <1$. The Kraft-Chaitin Theorem, which appeared independently in work of Levin, states that for any such set $W$ there exists a prefix-free machine $\M$ such that for any pair $(\s, n)\in W$ there is a $\tau \in \twow$ such that $|\tau|=n$ and $\M(\tau)=\s$. 

In this paper the main objects of study are the weakenings of the standard lowness notions discussed above derived from replacing the constants with slow-growing functions. 

\begin{definition}$\ $\\
\begin{itemize}
\item For a function $f: \twow \rightarrow \N$, a real $A$ is \emph{low for K up to f} if for all $\s \in \twow$
$$K(\s) \leq^+ K^{A}(\s) +f(\s).$$

\item  For a function $g:\N \rightarrow \N$, a real $A$ is $K$\emph{-trivial up to g} if for all $n \in \N$ 
$$K(\restr{A}{n}) \leq ^+ K(n) +g(n)$$
\end{itemize}
\end{definition}
We write $\LK(f)$ for the set of reals that are low for $K$ up to $f$, and $\KT(g)$ for the set of reals that are $K$-trivial up to $g$. In this notation $\KT(0)$ is the set of standard $K$-trivials and $\LK(0)$ is the set of standard lows for $K$ (so $\KT(0)=\LK(0)$).

The question now arises as to which functions it will be fruitful to consider for these $f$ and $g$. Obviously some functions grow quickly enough that $\LK(f)$ or $\KT(g)$ is all of $\twow$. On the other hand, many functions (any with a finite $\limsup$) will just give us $\KT(0)$ or $\LK(0)$ again. As these functions represent the rates of growth of some quantities, it is natural to consider \emph{orders}, that is, functions that are unbounded and nondecreasing (some sources additionally require that orders be recursive, but we make no such restriction). In principle we can consider orders of arbitrarily high arithmetic complexity. However, Csima and Montalb{\'a}n showed that there is a $\Delta^{0}_{4}$ order $f$ such that $\KT(f)=\KT(0)$, that is, $A$ is $K$-trivial if and only if $K(\restr{A}{n})\leq^+ K(n)+f(n)$ \cite{csimamontalbangap}. Later Baartse and Barmpalias improved this by constructing a $\Delta^{0}_{3}$ order with this property \cite{baartsebarmpalias}, and showed that no such so-called `gap function' could be $\deltwo$. Thus, the $\deltwo$ order case is where these weakenings can be interesting and can be handled in a general way. It will often be more convenient in the proofs to work with a slightly more general notion than being a $\deltwo$ order, which we define below.


For a total function $f:\N \rightarrow \N$, a \emph{recursive approximation} is a uniformly recursive series of functions $(f_s)$ such that for all $x$, $\lim_{s\rightarrow \infty} f_s(x)=f(x)$. By the Schoenfield Limit Lemma and Post's Theorem (both in, for example \cite{soare}) a function has a recursive approximation if and only if it is $\deltwo$. We use some effective listing of all partial recursive approximations and write $\phi_{e,s}$ for the $s$th stage of the $e$th approximation.  

\begin{definition}
A $\deltwo$ function $f \colon \N \rightarrow \N$ is \emph{finite-to-one approximable} if it is total and has a recursive approximation $f_s \rightarrow f$ such that for any $n\in \N$, for all but finitely many $m\in N$, for all $s$, $f_s(m) >n$. Such an approximation is called a \emph{finite-to-one approximation}.
\end{definition}

We note that this is a more restrictive notion than having an approximation that is finite-to-one at each stage (any $\deltwo$ function will have such an approximation). We require rather that for a given output there are only finitely many inputs that are \emph{ever} in its preimage (so the function is finite-to-one over the whole approximation). With a simple diagonalization one can even construct a  finite-to-one $\deltwo$ function that fails to have a finite-to-one approximation in the above sense, which necessitates the complication of terminology.

Finite-to-one approximability may seem like an odd condition to impose, but in some sense it is a generalization of being an order. Any $\deltwo$ order is finite-to-one approximable, by taking any recursive approximation and selecting only the stages where it looks like an order on initial segments of increasing length (and replacing the tail with the identity, if necessary). Moreover, any finite-to-one approximable function pointwise dominates some $\deltwo$ order. Since each $n$ will only ever appear in the output for finitely many inputs, each time it does so we can drop the value on all smaller inputs to $n$ to maintain monotonicity. Eventually we reach a point where $n$ never appears again, so our new function will have $\liminf$ greater than $n$.

A central concept of this paper will be those reals that are $K$-trivial or low for $K$ up to \emph{every} $\deltwo$ order.  We use
$$\KT(\deltwo)=\bigcap\limits_{f \text{ a } \deltwo \text{ order}}\KT(f),$$
to denote the set of reals that are $K$-trivial up to every $\deltwo$ order and
$$\LK(\deltwo)=\bigcap\limits_{f \text{ a } \deltwo \text{ order}}\LK(f).$$
to denote the set of reals that are low for $K$ up to every $\deltwo$ order. By the discussion above it should be clear that these coincide with the reals that are $K$-trivial or low for $K$ up to every finite-to-one approximable function. 

Since no Martin-L\"{o}f random real can be $K$-trivial or low for $K$ up to even $\log(n)$, it is clear that both of these sets have measure $0$. The reals in $\KT(\deltwo)$ are within every $\deltwo$ order of being $K$-trivial. Another way to think of these reals is that, while they may not be $K$-trivial, there is no $\deltwo$ witness to their non-$K$-triviality. Any function $f\colon \N \rightarrow \N$ such that $f(c)$ gives an $n$ with $K(\restr{A}{n}) > K(n)+c$ can not be $\deltwo$, and the analogous statement holds for $\LK(\deltwo)$. Since the function $K(\s)$ is itself $\deltwo$, among the $\deltwo$ reals the only reals in $\KT(\deltwo)$ or $\LK(\deltwo)$ must be the $K$-trivials. 

\begin{prop}
If $A$ is $\deltwo$ and not $K$-trivial, then $A\nin \KT(\deltwo)$ and $A\nin \LK(\deltwo)$. 
\end{prop}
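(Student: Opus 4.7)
The plan is to establish $A\notin\KT(\deltwo)$ directly and then deduce $A\notin\LK(\deltwo)$ from it via an elementary inclusion. The engine is that the gap function $g(n)=K(\restr{A}{n})-K(n)$ is $\deltwo$ whenever $A$ is, since $K$ is $\deltwo$ via the standard upper-semicomputable approximation $K_s\searrow K$ and $A$ admits a recursive approximation $A_s\to A$ by hypothesis; the composed sequence $g_s(n):=K_s(\restr{A_s}{n})-K_s(n)$ is then a recursive approximation to $g$. Not being $K$-trivial is exactly the assertion that $g$ is unbounded above.

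For $A\notin\KT(\deltwo)$, I would let $n_i$ be the least $n$ with $g(n)\ge i^2$ and define $f(n)=i$ on the interval $[n_i,n_{i+1})$. Since $g$ is $\deltwo$ and unbounded, the sequence $(n_i)$ is total and $\deltwo$, and the resulting $f$ is nondecreasing, unbounded, and $\deltwo$, hence a $\deltwo$ order. At each $n_i$ we have $K(\restr{A}{n_i})\ge K(n_i)+f(n_i)+(i^2-i)$, so no additive constant can bound $K(\restr{A}{n})-K(n)-f(n)$. Hence $A\notin\KT(f)$, and since $\KT(\deltwo)\subseteq\KT(f)$, we conclude $A\notin\KT(\deltwo)$.

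For $A\notin\LK(\deltwo)$, I would use that for any $\deltwo$ order $g_0\colon\N\to\N$, the function $\tilde g(\s):=g_0(|\s|)$ is a $\deltwo$ order on $\twow$, and that $\LK(\tilde g)\subseteq\KT(g_0)$: specializing the low-for-$K$ inequality to $\s=\restr{A}{n}$ gives $K(\restr{A}{n})\le^+ K^A(\restr{A}{n})+g_0(n)$, and combining with $K^A(\restr{A}{n})\le^+ K^A(n)\le^+ K(n)$ (the $A$-oracle can reconstruct $\restr{A}{n}$ from $n$, and oracle access cannot hurt plain complexity) yields $K(\restr{A}{n})\le^+ K(n)+g_0(n)$. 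Taking $g_0:=f$ from the previous paragraph, the contrapositive of this inclusion gives $A\notin\LK(\tilde g)$, so $A\notin\LK(\deltwo)$.

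The only place where any actual work is needed is in verifying that $(n_i)$ and hence $f$ are genuinely $\deltwo$, which reduces to the standard fact that the least witness of an eventually-stable $\deltwo$ statement can be computed in the limit. One nice feature of this approach is that it bypasses the question of whether $K^A$ is itself $\deltwo$ when $A$ is (it fails in general: $K^A$ is only guaranteed to be $\Delta^0_3$), by transferring the $\LK$ statement to the $\KT$ statement through initial segments of $A$.
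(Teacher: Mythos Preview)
Your proof is correct. For $A\notin\KT(\deltwo)$ both you and the paper build a $\deltwo$ order out of the unbounded $\deltwo$ function $n\mapsto K(\restr{A}{n})-K(n)$; the paper does so via a finite-to-one approximation taking values $2^{l-1}$ for the largest power $2^l$ witnessed at some $m\le n$, while you take least witnesses $n_i$ for the thresholds $i^2$ and define the associated step function. These are interchangeable implementations of the same idea.

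For $A\notin\LK(\deltwo)$ the approaches genuinely differ, and yours is the more robust one. The paper attempts a parallel construction with $\s\mapsto K(\s)-K^{A}(\s)$ in the role of the gap function, asserting that this difference is $\deltwo$ whenever $A$ is. You instead observe that $\LK(\tilde f)\subseteq\KT(f)$ for $\tilde f(\s)=f(|\s|)$, via the chain $K(\restr{A}{n})\leq^{+}K^{A}(\restr{A}{n})+f(n)\leq^{+}K^{A}(n)+f(n)\leq^{+}K(n)+f(n)$, and take the contrapositive. This is exactly the argument the paper itself deploys a little later to prove $\LK(\deltwo)\subseteq\KT(\deltwo)$, so the reduction is entirely in its spirit. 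Your closing remark about bypassing the status of $K^{A}$ is apt: the paper's assertion that $K-K^{A}$ is $\deltwo$ for every $\deltwo$ real $A$ is not correct as stated (for $A=\emptyset'$ one has $K^{A}\oplus A\equiv_T \emptyset''$, so $K^{\emptyset'}\not\leq_T\emptyset'$), and your route avoids the issue entirely.
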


\begin{proof}
The proof is almost trivial, except for one slight complication. For a $\deltwo$ real $A$, the function $K(\restr{A}{n})-K(n)$ is also $\deltwo$. However, it need not be finite-to-one approximable. For example, for every r.e. real this difference has a finite $\liminf$, by results of Barmpalias and Vlek \cite{barmpaliasvlek}.

Given a $\deltwo$ non-$K$-trivial real $A$, we define $g_s(n)$ to be $2^{l-1}$ if $n\leq s$, where $l$ is the greatest such that for some $m\leq n$, $K_s(\restr{A_s}{m})-K(m) > 2^{l}$, using $2^{-1}=0$, and $g_s(n)=n$ for $n >s$. Eventually $\restr{A}{n}$, $K(\restr{A}{m})$, and $K(m)$ for all $m\leq n$ all converge, so $g=\lim g_s$ is total. 

Since $A$ is not $K$-trivial, for each $l$ there will be some $n$ such that $K(\restr{A}{n})-K(n)>2^{l}$, and so after the first stage $s$ for which these values have all converged no number greater than $n$ will ever get a $g$-value less than $2^{l-1}$. Thus, the approximation $g_s \rightarrow g$ is a finite-to-one approximation. It is clear that $A \nin \KT(g)$, since for infinitely many $l$ there is an $n$ with $K_s(\restr{A}{n})-K(n) > 2^{l}$ but $g(n)\leq 2^{l-1}$. Therefore, $A$ cannot be in $\KT(\deltwo)$

Similarly, if $A$ is $\deltwo$ then so is the function $K(\s)-K^{A}(\s)$. In case this difference is not finite-to-one approximable, we can define the function $f_s(\s)=2^{n-1}$ for the largest $n$ such that $\exists \tau$ with $|\tau|\leq |\s|$ and $K_s(\tau)-K^{A}_s(\tau)\geq 2^n$ if $\s$ is one of the first $s$ strings in length-lexicographic order and $f_s(\s)=|\s|$ if $\s$ is not one of these strings. This $f_s\rightarrow f$ will be finite-to-one approximable, and $A$ will fail to obey $\forall \s$, $K(\s)\leq^+K^{A}(\s)+f(\s)$.


\end{proof}

In this paper we examine some properties of $\KT(\deltwo)$ and $\LK(\deltwo)$. In particular, we are interested in how these sets compare to each other, to the standard notions of lowness for $K$ and $K$-triviality, and to some other weakenings of these notions. We first review some results about the classic case and what is known about certain weakenings. 

In the classic case, as noted above it is a result of Nies that $\LK(0)=\KT(0)$ \cite{lowforkktrivial}. $\LK(0)$ (and thus $\KT(0)$) is clearly closed downwards under $\leq_T$, since a real can simulate the compression done by any real it can compute. Chaitin \cite{chaitin} showed that $\KT(0)$ was countable and all its members are $\deltwo$. Downey, Hirshfeldt, Nies, and Stephan have shown that $\KT(0)$ is closed under effective join \cite{downhirschniessteph}. 

In contrast Baartse and Barmpalias \cite{baartsebarmpalias} constructed for any $\deltwo$ order $g$ a perfect set of reals in $\KT(g)$, so this set is uncountable and has non-$\deltwo$ elements. Hirschfeldt and Weber, though they did not use this terminology, first showed that for any finite-to-one approximable $f$, $\LK(f)$ contains an r.e. set that is not in $\LK(0)$ \cite{hirschfeldtweber}. In an earlier paper \cite{herbert}, the author has shown that there is a perfect set of reals in $\LK(f)$ for any finite-to-one approximable $f$, and in fact in $\LK(\deltwo)$.  Additionally, the perfect set constructed in that paper has the property that for any real $A$ there are two elements $B_1$ and $B_2$  of the set such that $B_1 \oplus B_2 \geq_T A$, so in general $\LK(f)$ and $\LK(\deltwo)$ are not closed under effective join (i.e., except for the trivial case when $\LK(f)=2^{\omega}$). 

In Section 2 we discuss the downwards closure of $\KT(\deltwo)$ under $\leq_T$ and implication between $\KT(\deltwo)$ and $\LK(\deltwo)$. In Section 3 we give some positive closure results for $\KT(\deltwo)$, and in Section 4 we compare these notions with other lowness notions related to Kolmogorov complexity and closures under weaker reducibilities. We give further directions for study in Section 5. 



\section{Downwards closure}

It is easy to see that lowness for $K$ as a property of reals is closed downwards under $\leq_T$, since a real can simulate the compression algorithms of any real it can compute. From this and the Nies's Theorem it follows that $K$-triviality is also closed downwards. The same argument shows that $\LK(f)$ is closed downwards for any $f$, but we do not have an analog of Nies's Theorem in the weaker case, so it does not necessarily follow that $\KT(g)$ is closed downwards in general. In fact, this is not the case. The downwards closure under $\leq_T$ fails in a very strong sense, with $\KT(\deltwo)$ (and hence each $\KT(g)$ for $\deltwo$ order $g$) actually being cofinal in the Turing degrees.

\begin{thm}\label{anybktrivall}
For any real $B$ there is an $A\geq_{T} B$ such that $A \in \KT(\deltwo)$.
\end{thm}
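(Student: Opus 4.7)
The plan is to sparsely code $B$ into $A$ at a non-computably chosen sequence of positions $(n_i)$, grown so fast that the number of bits of $B$ committed by position $n$ is dominated by every $\deltwo$ order. Using the effective enumeration of partial recursive approximations, set $n_0 = 0$ and inductively let $n_{i+1}$ be the least $n > n_i + 2$ such that, for every $e \leq i$ for which $\phi_e$ is a $\deltwo$ order, $\phi_e(n) \geq i+1$. This exists because any finite collection of $\deltwo$ orders is cofinitely at least $i+1$. Then define $A$ by $A(n_i) = 1$, $A(n_i + 1) = B(i)$, and $A(m) = 0$ otherwise. Scanning $A$ left-to-right, the gap $n_{i+1} > n_i + 2$ guarantees that the first $1$ past $n_i + 2$ is $n_{i+1}$, so $A$ recovers the $n_i$'s and thus $B$; in particular $A \geq_T B$.

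For the complexity estimate, fix a $\deltwo$ order $g = \phi_e$ and set $k(n) = |\{i : n_i \leq n\}|$. For $n < n_e$ we have the trivial bound $k(n) \leq e$; for $n \geq n_e$, if $n_i \leq n < n_{i+1}$ then $i \geq e$, and by construction $g(n) \geq g(n_i) \geq i = k(n) - 1$. Thus $k(n) \leq^+ g(n)$ with additive constant depending on $e$. It therefore suffices, for each $e$, to construct a prefix-free machine $M_e$ with $K_{M_e}(\restr{A}{n}) \leq^+ K(n) + k(n)$, since by universality we then get $K(\restr{A}{n}) \leq^+ K(n) + g(n)$, establishing $A \in \KT(g)$ for every $\deltwo$ order $g$ and hence $A \in \KT(\deltwo)$.

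I would build $M_e$ via a Kraft--Chaitin enumeration. At each stage $s$, compute the stage-$s$ guesses $n_i^s$ using the approximations $\phi_{e',s}$ for $e' \leq i$; form the current $A^s \restriction n$ and $k^s(n)$ for $n \leq s$; and whenever the triple $(A^s \restriction n, K_s(n), k^s(n))$ changes, enqueue the request $(A^s \restriction n, K_s(n) + k^s(n) + c)$ for a sufficiently large fixed constant $c$. The main obstacle will be verifying that the total Kraft--Chaitin weight stays finite. The weight accumulated from decreases in $K_s(n)$ telescopes to $O(2^{-K(n) - k(n) - c})$ per $n$; the delicate part is bounding the weight from instability of the approximation $(n_i^s)$, for which one uses that each $n_i^s$ eventually stabilizes (since every $\phi_{e',s}(m)$ does for fixed $e', m$) and that a change in some $n_i^s$ only alters $A^s \restriction n$ for $n$ in a controlled range beyond $n_i^s$. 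This bookkeeping parallels the Kraft--Chaitin arguments in the cited constructions of perfect sets in $\LK(\deltwo)$ and in $\KT(g)$.
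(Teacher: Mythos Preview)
Your construction has a genuine gap at exactly the point you flag as ``the main obstacle.'' The sequence $(n_i)$ you define depends on the set $\{e : \phi_e \text{ is a } \deltwo \text{ order}\}$, which is $\Pi^0_3$-complete. Hence there is no recursive approximation $n_i^s \to n_i$: at stage $s$ you must decide, for each $e' \leq i$, whether to impose the constraint $\phi_{e'}(n) \geq i+1$. If you impose it for all $e' \leq i$, then a single bad $\phi_{e'}$ (partial, or total with bounded range) makes $n_{i+1}^s$ undefined forever. If instead you impose it only for those $e'$ that ``look like orders at stage $s$,'' that predicate can oscillate infinitely often for a bad $e'$, so $n_i^s$ oscillates and never stabilizes. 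Your justification that ``each $n_i^s$ eventually stabilizes (since every $\phi_{e',s}(m)$ does for fixed $e',m$)'' conflates pointwise convergence of the approximations with convergence of a definition that implicitly quantifies over all $m$ and over the $\Pi^0_3$ predicate of being an order. Without a stable recursive approximation to $A$, the Kraft--Chaitin requests you enqueue for the changing $A^s \restriction n$ have no reason to carry bounded weight, and in fact will not.

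The paper's proof confronts exactly this difficulty and resolves it by building not a single real but a recursive \emph{tree} $T$ whose branching nodes record guesses about whether each $\phi_e$ is a finite-to-one approximation (with additional coding branchings interleaved to store the bits of $B$). The Kraft--Chaitin sets $M_e$ pay for initial segments of \emph{all} living paths through $T$ simultaneously; the branching levels are placed only where $\psi_{\alpha,s}$ is large enough that the total mass over all branches stays bounded by a geometric sum. The desired $A$ is then the unique path following the correct guesses and the bits of $B$, and $A$ recovers $B$ because the tree is built so that a path can recursively recognize when a marker $\gamma(\alpha)$ has reached its final position. This tree-of-guesses mechanism is the missing idea in your sketch; your single-sequence coding cannot be made effective without it.
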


\begin{proof}
We wish to build an $A$ that is Turing-above $B$ and that is $K$-trivial up to every finite-to-one approximation. We do not know \emph{a priori} which $\phies$ are finite-to-one approximations, and, since we need Kraft-Chaitin sets to be recursively enumerable, we will have to build a tree $T$ and use the branching nodes to mark guesses as to the behaviors of the $\phies$ 's. The tree we build will be independent of $B$ and will contain a witness $A$ for every real $B$. 

Placing the branching nodes in our tree will be a delicate operation. We will use a system of markers $\gamma(\alpha)$ to keep track of the values where corresponding $\phies$'s are large enough to place another branching node, which we will use to guess the behavior of the next $\phi_{e+1,s}$. Additionally, we will use these markers to mark `coding locations' where the bits of a given real can be stored, which will make the behavior of the tree around these nodes slightly more complicated. We will have to introduce another kind of branching node which will keep track of which value is in the $i$th bit of $B$. Once we put a marker $\gamma(\alpha)_s$ at a node to make it a guessing node, both successors of that node, $\gamma(\alpha)_s\conc 1$ and $\gamma(\alpha)_s\conc 0$, will also both be in $T_s$, and a path taking one or the other of these nodes will correspond to guessing whether $\phi_{|\alpha|}$ is or is not a finite-to-one approximation. After this branching we will immediately introduce another branching, so that $\gamma(\alpha)_s\conc 11$, $\gamma(\alpha)_s\conc 10$, $\gamma(\alpha)_s\conc 01$, and $\gamma(\alpha)_s\conc 00$ will all be in $T_s$. The value of a path through $\gamma(\alpha)$ at $|\gamma(\alpha)|+2$ will correspond to the $|\alpha|$th bit of $B$. We distinguish between the two kinds of branching nodes as either \emph{guessing nodes} or \emph{coding nodes}. To \emph{kill} a node is to make a commitment never to add nodes to $T$ above it, and a node that has not yet been killed is \emph{living}.


We make some definitions for ease of bookkeeping. A path that follows $\alpha$ through the first $|\alpha|$-many branching nodes is only making guesses about the first $|\alpha|/2$-many $\phies$'s, since only the even-numbered branching nodes correspond to guesses. We say that $e$ \emph{is a guessing member} of $\alpha$ if and only if $\alpha(e)=1$ and $e$ is even. We denote this $e \in_g \alpha$.  For each $\alpha\in \twow$, we use $\psi_{\alpha,s}(n)=\min\{\phi_{e,s}(n): e \in_g \alpha\}$ to denote the function that makes all of $\alpha$'s guesses. Note that $\psi_{\alpha}$ considers up to $|\alpha|/2$ many $\phies$'s. We will need to ensure we branch at a rate that forces these values to be large enough that we can afford to pay for multiple initial segments of the same length into our Kraft-Chaitin sets. We will want to ensure that each $\psi_{\alpha,s}$ takes values at least as large as $2|\alpha|$ before we add another branching. 

We build a separate Kraft-Chaitin set $M_e$ to witness $A$'s $K$-triviality up to $\phi_e$. $M_e$ will only take requests to describe initial segments that are on the tree above nodes that guess that $\phies$ is a total finite-to-one approximation, so for a correct path $A$ through $T$ these $M_e$ together will witness that $A \in \KT(\deltwo)$.  

Ensuring that there is such an $A$ that is Turing-above $B$ will be handled after the construction. Essentially, we will just show that the path through the `true subtree' that contains the bits of $B$ in its coding locations can find these locations recursively  

The requirements we are trying to meet are

\begin{flalign*}
B_{\alpha}: \ &\text{The path through } T \text{ that follows } \alpha \text{ through the branching nodes branches}& \\ 
&\text{twice more at a level } n \text{ where } \psi_{\alpha}(n) \geq 2|\alpha|&\\
\end{flalign*}
for all $\alpha\in\twow$ with $|\alpha|$ even, 

\begin{flalign*}
&R^e_i: \ \ \text{ For all } n \text{ with } 2i\leq\phi_e(n)< 2i+2, K(\restr{A}{n}) \leq^+ K(n) +2i&
\end{flalign*}
for all $i, e \in \N$ with $e\leq i$, and

We order these requirements $B_{\emptystr}, R^0_0, B_{\langle 00 \rangle}, B_{\langle 01 \rangle},  B_{\langle 10 \rangle}, B_{\langle 11 \rangle}, R^0_1, R^1_1, B_{\langle 0000 \rangle}, \hdots$. The construction will be an injury construction, and we give the strategies for meeting each of the requirements. 

A $B_{\alpha}$ requirement will \emph{require attention} at a stage $s$ if there is not a living branching node $\tau$ above the path that follows $\alpha$ through $T_s$ with $\psi_{\alpha,s}(|\tau|) \geq 2|\alpha|$. The strategy for satisfying $B_{\alpha}$ is
\begin{enumerate}
\item Search for an $n$ such that $\psi_{\alpha,s}(n) \geq 2|\alpha|$
\item Extend the longest path that follows $\alpha$ with a string of $0$s to a length $n'+1$ where $n'>n$ has not been used yet in the construction. Put the marker $\gamma(\alpha)_{s+1}$ at the node on this branch of length $n'+1$. Put both extensions of length $n'+2$ and all four extensions of this node of length $n'+3$ into $T_{s+1}$. 
\end{enumerate}

An $R^e_i$ requirement will \emph{require attention} at a stage $s$ if there is an $n$ with $2i\leq \phies(n)< 2i+2$ and there is a living path $\tau$ through $T_s$ of length at least $n$ and there is an $\alpha$ such that $e$ is a guessing member of $\alpha$ and $\gamma(\alpha)_s \preceq \tau$ such that there is not a request in $M_{e,s}$ for a description of $\restr{\tau}{n}$ of length less than $K_s(n)+2i$. The strategy for satisfying $R^e_i$ is
\begin{enumerate}
\item For all such $n$ and $\tau$, for the longest $\alpha$ such that $\gamma(\alpha)_s\preceq \tau$ and $e$ is a guessing member of $\alpha$, put the request $(\restr{\tau}{n}, K_s(n)+2|\alpha| )$ into $M_{e,s+1}$.

\end{enumerate}

We now give the construction, which will call these subroutines as necessary.

\textbf{Stage 0}: $T_0=\emptyset$, $M_0=\emptyset$, $\gamma(\emptystr)_0=\emptystr$ and $\gamma(\alpha)_0$ undefined for all other $\alpha$.\\

\textbf{Stage s+1}: \\
\begin{enumerate}
\item Compute $\phi_{e,s+1}(n)$ for $e,n \leq s+1$.

\item If there are an $n$ and an $\alpha$ such that $|\gamma(\alpha)_s| < n$ and $\psi_{\alpha,s+1}(n)<2|\alpha|$, then $\gamma(\alpha)_s$ is no longer marking a point after which $\psi_{\alpha}$ is greater than $2|\alpha|$, so for the length-lexicographically first $\alpha$

\begin{enumerate}
\item Kill all branches of the tree above $\gamma(\alpha)_s$. 
\item Let $\gamma(\alpha)_s^{-}$ be the initial segment of $\gamma(\alpha)_s$ of length $|\gamma(\alpha)_s|-1$ and put $\gamma(\alpha)_s^-\conc 1$ into $T_{s+1}$ as a living node. Note that again by construction $\gamma(\alpha)_s$ always ends in a $0$ and is always a node of length at least 2 longer than any number seen earlier in the construction, hence $\gamma(\alpha)_s^-\conc 1$ will not have been used before this point. 

\item Repeat 2a) and 2b) for all other such $\alpha$ with $\gamma(\alpha)_s$ still living, in length-lexicographic order. 
\end{enumerate}

\item For the highest priority requirement that requires attention of the first $s+1$ many requirements, run $s+1$-many steps of its strategy.
\item Repeat 3.) for any of the first $s+1$-many requirements that still require attention, in order of decreasing priority.

\end{enumerate}

This completes the construction. We let $T=\bigcup\limits_s T_s$, $M_e=\bigcup\limits_s M_{e,s}$, and $\gamma(\alpha)=\lim\limits_s \gamma(\alpha)_s$. Unfortunately, in this construction, as in life, not all requirements can be satisfied. We call a string $\alpha$ \emph{correct} if for all $n \leq |\alpha|$, if $n=2m$ is even then $\alpha(n)=1$ if and only if $\phi_{m,s}$ is a finite-to-one approximation. We need to show that the requirements that are relevant to building correct paths through $T$ are all satisfied, and that the mass we put into $M_e$ to satisfy the $R^e_i$ requirements is bounded.

\begin{lemma} 
The requirements $B_{\alpha}$ for correct $\alpha$ and  $R^e_i$ for $e$ such that $\phies$ is a finite-to-one approximation are all eventually satisfied. 
\end{lemma}

\begin{proof}
First we argue that each of these requirements can be subject to at most finitely many injuries. An injury to any $B_{\alpha}$ requirement only occurs when for some $e \in_g \alpha$ the value of $\phies(n)$ drops below $2|\alpha|$ for some $n>|\gamma(\alpha)_s|$. When this happens, we respond by moving $\gamma(\alpha)_{s+1}$ to a higher level. Now, since there are only finitely many $e\in_g \alpha$, if there were infinitely many such injuries then at least one $e\in_g \alpha$ would be responsible for infinitely many. There would then have to be infinitely many $n$ such that for some $t$, $\phi_{e,t}(n) <2|\alpha|$, so $\phies$ would not a finite-to-one approximation. Then $e \in_g \alpha$ is a contradiction to $\alpha$'s correctness. 

For $R^e_i$ requirements, since $\phies$ is a total finite-to-one approximation we will eventually reach some stage $s$ where $\phies$ has converged on all $n$ such that $\phi_e(n)<2i+2$. At this stage all the $n$ that $R^e_i$ will ever be concerned about have been found. We let the largest of these $n$ be $n'$. Then injuries to $R^e_i$ can only occur either when $K_s$ changes for one of these $n$, but this happens only finitely often, or when there is a change in $T$ below $n'$ on some branch that is guessing that $\phies$ is a total finite-to-one approximation. Each of these changes moves some marker to a point larger than $n'$, and, since there were only finitely many markers at positions lower than $n'$ at stage $s$, this too can only happen finitely often. 

Now we need to show that once these requirements are no longer injured they will be able to act to satisfy themselves. For $B_{\alpha}$ requirements, the strategy waits until it finds an $n$ such that $\psi_{\alpha,s}(n)\geq 2|\alpha|$, and then extends a path in the tree to this height and branches twice. Since every $e\in_g \alpha$ is in fact a total finite-to-one approximation, there will exist an $n$ for which $\psi_{\alpha}(m) \geq 2|\alpha|$ for all $m>n$, and so eventually $B_{\alpha}$ will find such an $m$ and act and be satisfied. 
For $R^e_i$ requirements, the strategy puts requests into $M_e$. After it is no longer injured, it needs to act at most once for each $n$ with $\phi_e(n)<2i+2$ and there are only finitely many of these so it is eventually satisfied. 

\end{proof}

The $R^e_i$ requirements for $\phies$ that are finite-to-one approximations will act until satisfied, but this happens just by placing the relevant requests into $M_e$. We now need to find an upper bound on the mass put into $M_e$ in this way, to ensure we get a machine that serves our purpose.     
 
\begin{lemma}
For all $e$, $\mu(\emph{dom}(M_e))\leq 8$.
\end{lemma}


\begin{proof}
Let us start by fixing an $e$. We will bound the amount paid into $M_e$ for an arbitrary $n$ using some description $\s$ of $n$ from $\U$. For any path $\tau$ of length $n$ in $T$, while $\tau$ is alive there is some maximal $\alpha$ such that $\gamma(\alpha)_s \preceq \tau$. Now, a given $\alpha$ can be maximal for at most 4 such $\tau$'s since $T_s$ will branch twice immediately after $\gamma(\alpha)_s$ and then no more until $\gamma(\alpha')$ for some $\alpha' \succ \alpha$, at which point $\alpha'$ would be maximal. A change in $T_s$ below $n$ moves at least one of these markers to some new height above $n$, at which point a smaller $\alpha$ becomes maximal. Recall that $\gamma(\alpha)$ are only placed for $\alpha$ of even length. 

For each $\alpha$ of even length, we know that $\gamma(\alpha)_s$ is placed at a node such that $\psi_{\alpha,s}(|\gamma(\alpha)_s|) \geq 2|\alpha|$, and so we know the rate at which we will pay into $M_e$ for $\tau$ for which $\alpha$ is maximal is $2^{-2|\alpha|}$. Here we use the fact that we only pay for paths with $e \in_g \alpha$. Now all that remains is to add up the total mass that could be paid into $M_e$ when any $\alpha$ is maximal such that $\gamma(\alpha) <n$. This gives us the sum

$$\sum\limits_{\substack{|\alpha| \text{ even }\\0\leq|\alpha|<\infty}} 4\cdot 2^{-|\s|}2^{-2|\alpha|}.$$

Since there are $2^{2i}$-many $\alpha$ of length $2i$ this can be rewritten as

$$4\cdot \sum\limits_{i=0}^{\infty} 2^{2i}\cdot 2^{-|\s|-4i}.$$

This sum reduces to $4\cdot 2^{-|\s|}\cdot \sum\limits_{i=0}^{\infty} 2^{-2i}$, which is bounded by $2$, so we can bound the mass paid into $M_e$ on behalf of $n$ using $\s$ by $8\cdot 2^{-|\s|}$. We now sum over all $\s$ in the domain of the universal machine to find a bound of the mass paid into $M_e$ for any $n$ using any $\s$. We get that this mass is bounded by $\sum\limits_{\s \in \dom(\U)} 8\cdot 2^{-|\s|} \leq 8$, and we are done. 

\end{proof}

This lemma gives us that a path through $T$ that guesses correctly that $\phies$ is a finite-to-one approximation will be $K$-trivial up to $\phi_e$. Thus, paths through $T$ that guess correctly about the behavior of every $\phies$ (i.e. that are \emph{correct}) will be $K$-trivial up to every finite-to-one approximation. Now all that remains is to show that given a real $B$ we can find a path through this true subtree of $T$ that computes $B$. 

\begin{lemma}
For any $B$, there is an $A\in[T]$ such that $A\geq_{T} B$ and $A$ is correct. 
\end{lemma} 

\begin{proof}
Suppose we are given $B$. We define a sequence of strings $\alpha_i$. Let $\alpha_i$ be such that $|\alpha_i|=2i$ and for all $n< 2i$, if $n=2m+1$ then $\alpha_{i}(2m+1)=B(m)$ and if $n=2m$ then $\alpha_i(n)=1$ if and only if $\phi_{m,s}$ is a finite-to-one approximation. Since $\alpha_i$ is correct and $|\alpha_i|$ is even, $B_{\alpha_i}$ is a requirement in our construction that is eventually satisfied. This means that we will eventually place the marker $\gamma(\alpha_i)$ on some node that follows $\alpha_i$ through the first $2i$ many branching nodes, where it will remain for the rest of the construction. Now $\alpha_i\prec \alpha_{i+1}$ for all $i$, so $\gamma(\alpha_i)$ must necessarily be an initial segment of $\gamma(\alpha_{i+1})$ for all $i$. Then we can let $A=\bigcup\limits_{i} \gamma(\alpha_i)$ and this is well-defined. Now, $A$ follows $\alpha_i$ through $T$ for every $i$, and each $\gamma(\alpha_i)$ is correct about its guesses, so $A$ must also be correct about its guesses. From this and the previous lemma, we know that $A \in \KT(\deltwo)$. All that remains is to show that $A \geq_{T} B$. 

First, we know that the bits of $B$ are encoded somewhere in $A$, since they are the odd bits of the $\alpha_i$'s. $A$ follows $\alpha_i$ through the branching nodes of $T$, so $A(|\gamma(\alpha_i)|+2)=B(i)$. $A$ can simulate the construction of $T$ and the approximation to $\gamma(\alpha)$ for each $\alpha$. When we move a $\gamma(\alpha)$ we first kill the tree above $\gamma(\alpha)_s$ and then start building above $\gamma(\alpha)^{-}_s\conc 1$. Thus, $A$ can tell when it finds a $\gamma(\alpha_i)_s$ whether this will be the location of $\gamma(\alpha_i)$ at the end of the construction. When it reaches a stage such that $\gamma(\alpha_i)_s \prec A$ it knows this is the final location of $\gamma(\alpha_i)$, and so it can retrieve the $i$th bit of $B$.

\end{proof}

This was the final step in the proof of Theorem~\ref{anybktrivall}.
\end{proof}

We end this section with a few remarks on the proof. First, we note that the paths through $T$ that we construct to compute $B$ may have much higher Turing degree than is necessary. By the same process that $A$ uses to compute the bits of $B$, $A$ can deduce which $\phies$ are total finite-to-one approximations and the index set $\{ e : \phies \text{ is a total finite-to-one approximation } \}$ is $\Pi^0_3$-complete. Of course, for certain $B$ there may be much less complicated $A$ (for example, if $B$ is recursive then $A=B$ is in $\KT(\deltwo)$). 


Turning to analogues of Nies's Theorem (that $K$-triviality and lowness for $K$ coincide) with these weaker notions, we see that one direction still holds, and in fact follows easily.

\begin{prop}
If $A\in \LK(\deltwo)$, then $A\in \KT(\deltwo)$.
\end{prop}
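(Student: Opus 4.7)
The plan is to mimic the classical one-line argument that lowness for $K$ implies $K$-triviality, using as our weight function the composition of the given order with the length function.

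Let $g \colon \N \to \N$ be an arbitrary $\deltwo$ order; I want to show $A \in \KT(g)$. Define $f \colon \twow \to \N$ by $f(\s) = g(|\s|)$. Since $|\cdot|$ is recursive and $g$ is $\deltwo$, $f$ is $\deltwo$, and it inherits unboundedness and monotonicity (with respect to length) from $g$, so $f$ qualifies as a $\deltwo$ order on strings in the sense used to define $\LK(\deltwo)$. Since $A \in \LK(\deltwo) \subseteq \LK(f)$, applied to the string $\restr{A}{n}$ this gives
$$K(\restr{A}{n}) \leq^+ K^A(\restr{A}{n}) + f(\restr{A}{n}) = K^A(\restr{A}{n}) + g(n).$$

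Next I chain on two standard facts. First, using $A$ as an oracle, any description of the number $n$ can be extended by a fixed short prefix-free program that, with oracle $A$, reads off $\restr{A}{n}$; thus $K^A(\restr{A}{n}) \leq^+ K^A(n)$, with an additive constant independent of $g$. Second, every prefix-free description works in the presence of an oracle, so $K^A(n) \leq^+ K(n)$. Combining the three inequalities yields $K(\restr{A}{n}) \leq^+ K(n) + g(n)$, which is exactly $A \in \KT(g)$. Since $g$ was an arbitrary $\deltwo$ order, $A \in \KT(\deltwo)$.

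There is really no obstacle to speak of, since the implication follows directly from the classical short argument once one picks the string-order $f(\s)=g(|\s|)$ corresponding to the given number-order $g$. The only minor point worth checking is that $f$ is legitimately among the orders quantified over in the definition of $\LK(\deltwo)$, which is immediate from the observation that $f$ is $\deltwo$ and inherits order-like behavior from $g$.
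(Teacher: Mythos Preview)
Your proof is correct and follows essentially the same approach as the paper: you take an arbitrary $\deltwo$ order $g$ on $\N$, lift it to $f(\s)=g(|\s|)$ on $\twow$, apply $A\in\LK(f)$ to the strings $\restr{A}{n}$, and then chain the standard inequalities $K^{A}(\restr{A}{n})\leq^{+}K^{A}(n)\leq^{+}K(n)$. The paper does exactly this (with the notation $\hat{f}$ in place of your $f$).
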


\begin{proof}
For any $f\colon \N \rightarrow \N$, let $\hat{f}\colon \twow \rightarrow \N$ be given by $\hat{f}(\s)=f(|\s|)$. Clearly if $f$ is a $\deltwo$ order then so if $\hat{f}$. If $A\in \LK(\deltwo)$, then for any $\deltwo$ order $f$ we have $\A\in \LK(\hat{f})$, i.e. for some $c$, for all $\s\in\twow$, we have $K(\s)\leq K^{A}(\s)+\hat{f}(\s)+c$. In particular we get for all $n$, $K(\restr{A}{n})\leq K^{A}(\restr{A}{n})+f(n)+c$. Now, relative to $A$ there is a very short description of $\restr{A}{n}$: read off the first $n$ bits of the oracle. All this machine requires to produce this initial segment is a description of the number $n$, so $K^{A}(\restr{A}{n})\leq^+ K^{A}(n)$, and this can be no larger than $K(n)$. Thus, for all $n$, $K(\restr{A}{n})\leq^+K(n)+f(n)$, so $A\in \KT(f)$. Since this holds for any $\deltwo$ order $f$, $A$ must be an element of $KT(\deltwo)$. 
\end{proof}


In the other direction, however, Theorem~\ref{anybktrivall} gives a strong negative result. While $\LK(\deltwo)$ must be closed downwards in the Turing degrees, $\KT(\deltwo)$ is cofinal in this structure. This separates $\KT(\deltwo)$ for $\LK(f)$ for any $f$, not just $\deltwo$ orders. 

\begin{cor}
For any function $f:\twow \rightarrow \N$ either $\LK(f)=2^{\omega}$ or $\KT(\deltwo) \nsubseteq \LK(f)$. 
\end{cor}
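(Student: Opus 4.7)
The plan is to combine the cofinality result of Theorem~\ref{anybktrivall} with the easy observation that $\LK(f)$ is always downward closed under $\leq_T$. The argument is essentially a two-line consequence of these facts: if $\LK(f) \neq 2^{\omega}$ then some witness $B \notin \LK(f)$ exists, and Theorem~\ref{anybktrivall} produces some $A \geq_T B$ with $A \in \KT(\deltwo)$; downward closure then forces $A \notin \LK(f)$, giving the desired element of $\KT(\deltwo) \setminus \LK(f)$.

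First I would verify that $\LK(f)$ is downward closed in $\leq_T$ for \emph{any} $f$, not just $\deltwo$ orders. This is the standard argument mentioned in the opening of Section 2: if $A' \leq_T A$ and $A \in \LK(f)$, then from the universal oracle prefix-free machine with oracle $A'$ we can build a prefix-free machine that uses $A$ by first computing $A'$ from $A$ and then simulating. This yields $K^A(\s) \leq^+ K^{A'}(\s)$, and combining with $K(\s) \leq^+ K^{A}(\s) + f(\s)$ gives $K(\s) \leq^+ K^{A'}(\s) + f(\s)$, so $A' \in \LK(f)$.

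Now suppose $\LK(f) \neq 2^{\omega}$ and fix $B \notin \LK(f)$. By Theorem~\ref{anybktrivall} there is an $A \geq_T B$ with $A \in \KT(\deltwo)$. If $A$ were in $\LK(f)$, downward closure would give $B \in \LK(f)$, a contradiction. Hence $A \in \KT(\deltwo) \setminus \LK(f)$, establishing $\KT(\deltwo) \nsubseteq \LK(f)$.

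The main obstacle is already behind us in Theorem~\ref{anybktrivall}; the corollary itself is just a packaging argument that exploits the structural asymmetry highlighted just before the statement, namely downward closure of $\LK(f)$ versus cofinality of $\KT(\deltwo)$ in the Turing degrees. No further construction is needed, and the result applies uniformly to \textbf{every} $f \colon \twow \to \N$, regardless of the definability class of $f$.
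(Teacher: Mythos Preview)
Your proof is correct and follows essentially the same approach as the paper: assume $\LK(f)\neq 2^{\omega}$, pick $B\notin\LK(f)$, use Theorem~\ref{anybktrivall} to get $A\geq_T B$ with $A\in\KT(\deltwo)$, and conclude $A\notin\LK(f)$ by downward closure of $\LK(f)$ under $\leq_T$. The only difference is that you spell out the downward-closure argument explicitly, whereas the paper invokes it by reference to the discussion at the start of Section~2.
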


\begin{proof}
If there is an $B$ such that $B\notin \LK(f)$, then by Theorem~\ref{anybktrivall} there is an $A \geq_{T} B$ such that $A \in \KT(\deltwo)$. Now, since $\LK(f)$ is closed downwards under $\leq_{T}$ and $B\notin \LK(f)$, $A$ cannot be in $\LK(f)$. Thus, there is an $A$ that is in $\KT(\deltwo)$ but not in $\LK(f)$. 
\end{proof}

We know there are reals that are not in $\KT(\deltwo)$, so in particular $\KT(\deltwo)\neq 2^{\omega}$. This gives us the following corollary, which demonstrates that it is impossible to capture this notion of bounded initial segment complexity with any notion of bounded compressive power. 

\begin{cor}
There is no collection $F$ of functions $\twow \rightarrow \N$ such that $\LK(F)=\KT(\deltwo)$, where $\LK(F)=\bigcap\limits_{f\in F}\LK(f)$.
\end{cor}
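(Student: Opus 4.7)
The plan is to mimic the preceding corollary but for an arbitrary intersection of $\LK(f)$-classes, exploiting two facts: (i) for every function $f\colon \twow \to \N$, the class $\LK(f)$ is closed downwards under $\leq_T$ (since any real can simulate the compression done by oracles it computes), and therefore any intersection $\LK(F)=\bigcap_{f\in F}\LK(f)$ is also closed downwards under $\leq_T$; (ii) Theorem~\ref{anybktrivall} tells us $\KT(\deltwo)$ is cofinal in the Turing degrees.

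I would argue by contradiction: suppose $F$ is a collection of functions $\twow\to\N$ with $\LK(F)=\KT(\deltwo)$, and split on whether $\LK(F)=2^{\omega}$.

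In the first case, $\LK(F)=2^{\omega}$ would force $\KT(\deltwo)=2^{\omega}$. But this contradicts the observation made just before the statement of the corollary that $\KT(\deltwo)\neq 2^{\omega}$ (for instance, by the earlier Proposition, any $\deltwo$ real that is not $K$-trivial witnesses this). In the second case, pick any $B\notin \LK(F)$. Applying Theorem~\ref{anybktrivall} to $B$ yields some $A\geq_T B$ with $A\in \KT(\deltwo)=\LK(F)$. Since $\LK(F)$ is downwards closed under $\leq_T$, we would then have $B\in \LK(F)$, contradicting the choice of $B$.

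There is no substantive obstacle in this argument once Theorem~\ref{anybktrivall} is in hand; the only thing to double-check is that downward closure under $\leq_T$ of each $\LK(f)$ transfers to the intersection, which is immediate because an arbitrary intersection of $\leq_T$-downwards-closed classes is still $\leq_T$-downwards-closed. In effect this corollary is the preceding one with ``$\LK(f)$'' replaced by ``$\LK(F)$'', using the same cofinality-versus-downward-closure tension.
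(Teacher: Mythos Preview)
Your proof is correct and follows essentially the same approach as the paper. The paper derives this corollary directly from the preceding one together with the observation that $\KT(\deltwo)\neq 2^{\omega}$; you bypass the single-$f$ corollary and apply the cofinality-versus-downward-closure tension directly to the intersection $\LK(F)$, which amounts to the same argument.
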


In particular, since we know $\LK(\deltwo)\neq 2^{\omega}$, we have that $\KT(\deltwo)\nsubseteq \LK(\deltwo)$.


\section{Other Closures}

Now, by Theorem~\ref{anybktrivall} we know that in general $\KT(g)$ and $\KT(\deltwo)$ are not closed downwards under $\leq_T$, so traditionally they would not be considered `lowness' notions. In the interests of defending our title, we examine some other reducibility notions under which these sets are closed downwards. First we show that for a stronger computational reducibility $\KT(\deltwo)$ \emph{is} closed downwards. $A$ is \emph{weak truth-table reducible} to $B$ (denoted $A \leq_{wtt} B$) if there is a Turing functional $\Phi$ and a recursive function $f$ such that $\Phi^B=A$ and for any $n$, the use of the computation of the $n$th bit of $A$ from $B$ (the largest bit of $B$ that is queried in the computation) is no more than $f(n)$. That is, not only can we use $B$ to compute $A$, but we have a recursive bound on how much of $B$ is needed to compute a given amount of $A$. It follows easily from the definition of $K$-triviality that $KT(0)$ is closed downwards under $\leq_{wtt}$ and we show that this closure is preserved under the weakening to $\KT(\deltwo)$. 

\begin{thm}\label{ktfoawttdown}
If $A \leq_{wtt} B$ and $B\in \KT(\deltwo)$, then $A\in \KT(\deltwo)$.
\end{thm}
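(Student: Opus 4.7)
The plan is to adapt the classical proof that $\leq_{wtt}$ preserves $K$-triviality, twisted slightly to accommodate an arbitrary $\deltwo$ order rather than a constant. Fix a $\deltwo$ order $g$; we will show $A\in\KT(g)$. Let $\Phi$ and the recursive use bound $f$ witness $A\leq_{wtt} B$. Replacing $f$ by, for example, $\tilde f(n)=\sum_{k\leq n}f(k)+n+1$ (still a recursive use bound for $\Phi$), we may assume $f$ is strictly increasing, so that $f^{-1}(m):=\max\{k:f(k)\leq m\}$ is a total recursive nondecreasing function satisfying $f^{-1}(f(n))=n$ and $f^{-1}(m)\to\infty$ as $m\to\infty$.

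The crucial step is to apply the hypothesis on $B$ to a carefully chosen order: let $g'(m)=g(f^{-1}(m))$. Since $f^{-1}$ is recursive and nondecreasing, $g'$ is $\deltwo$, nondecreasing, and unbounded, hence a $\deltwo$ order. So $B\in\KT(g')$ furnishes a constant $c_B$ with $K(\restr{B}{m})\leq K(m)+g'(m)+c_B$ for all $m$.

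Now build a prefix-free machine $M$ that on input $\tau$ computes $\sigma=\U(\tau)$, sets $m=|\sigma|$ and $n=f^{-1}(m)$, and outputs $\Phi^{\sigma}\restriction n$ if all these converge. Prefix-freeness is inherited from $\U$. If $\tau$ is a shortest $\U$-description of $\restr{B}{f(n)}$, then $M(\tau)=\restr{A}{n}$, because the use of $\Phi^B$ in computing the first $n$ bits of $A$ is at most $f(n)=|\sigma|$. Combining everything,
$$K(\restr{A}{n})\leq^+ K(\restr{B}{f(n)})\leq^+ K(f(n))+g'(f(n))=K(f(n))+g(n)\leq^+ K(n)+g(n),$$
where the final inequality uses that $f$ is recursive, so a description of the length-$n$ string of zeros computes $f(n)$ and hence the length-$f(n)$ string of zeros. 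Since $g$ was arbitrary, $A\in\KT(\deltwo)$.

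The only real subtlety is the choice of $g'$: one must pre-compose $g$ with $f^{-1}$, not with $f$, so that when $K(\restr{B}{m})\leq^+ K(m)+g'(m)$ is specialized at $m=f(n)$ the right-hand side collapses to exactly $K(f(n))+g(n)$, rather than $K(f(n))+g(f(n))$, which would generally be larger and break the bound. Everything else (the reduction to strictly increasing $f$, the machine construction) is routine and mirrors the classical closure argument.
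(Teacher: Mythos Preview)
Your proof is correct and follows essentially the same approach as the paper: your auxiliary order $g'(m)=g(f^{-1}(m))$ is precisely the paper's function $h(n)=g(\max\{m:f(m)\leq n\})$, and the chain of inequalities $K(\restr{A}{n})\leq^{+}K(\restr{B}{f(n)})\leq^{+}K(f(n))+g(n)\leq^{+}K(n)+g(n)$ is identical. The only cosmetic difference is that you phrase things in terms of $\deltwo$ orders while the paper uses finite-to-one approximable functions, but these are interchangeable for defining $\KT(\deltwo)$.
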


\begin{proof}
Suppose $B$ can compute $A$ via Turing functional $\Phi$, with use $\phi^{B}$. For any recursive $f$, we can find a recursive function that majorizes $f$ and that is monotone increasing, so without loss of generality we can assume the we have an increasing recursive bound $f$ on the use of $\Phi^{B}$. Now, given $\restr{B}{f(n)}$ we can find $n$, since $f$ is recursive and injective, and then we can run $\Phi$ on this initial segment of $B$ to get the initial segment of $A$ of length $n$. Thus, to describe $\restr{A}{n}$ all we need is $\restr{B}{f(n)}$ and some constant that is a code for the functional $\Phi$, so we have $K(\restr{A}{n})\leq^{+}  K(\restr{B}{f(n)})$. We wish to show that for an arbitrary finite-to-one approximable function $g$, $K(\restr{A}{n})\leq^{+} K(n) +g(n)$. Given such a $g$, we define a new function $h$, by $h(n)= g(m)$, where $m$ is the greatest number such that $f(m) \leq n$. Finding this $m$ can be done recursively, so $h$ is also finite-to-one approximable. Thus, since $B$ is $K$-trivial up to $h$, $K(\restr{B}{f(n)})\leq^+ K(f(n)) +h(f(n))$. Now $f$ is recursive, so $K(n)=^{+} K(f(n))$, and by definition $h(f(n))=g(n)$, so finally we get $K(\restr{A}{n})\leq^{+} K(n)+g(n)$, as desired. 
\end{proof}


Another closure property we get for $\KT(\deltwo)$, in contrast to $\LK(\deltwo)$ as in \cite{herbert}, is that $\KT(\deltwo)$ is closed under effective join (the effective join of reals $A$ and $B$, denoted $A \oplus B$, is the real whose binary expansion is given by $A \oplus B (2n)=A(n)$ and $A \oplus B(2n+1)=B(n)$). The proof follows closely the proof that $\KT(0)$ is closed under effective join that was given by Downey, Hirschfeldt, Nies, and Stephan \cite{downhirschniessteph}. 

\begin{thm}\label{ktfoajoinclosed} 
For any reals $A$ and $B$, $A$, $B$ $\in\KT(\deltwo)$ if and only if $A \oplus B\in\KT(\deltwo)$. 
\end{thm}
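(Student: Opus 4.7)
The plan handles the two directions separately. The ``only if'' direction is immediate from Theorem~\ref{ktfoawttdown}: since both $A \leq_{wtt} A\oplus B$ and $B \leq_{wtt} A\oplus B$ via the recursive use $n\mapsto 2n$, membership of $A\oplus B$ in $\KT(\deltwo)$ forces both $A$ and $B$ into $\KT(\deltwo)$.

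For the ``if'' direction, I plan to adapt the Downey--Hirschfeldt--Nies--Stephan argument for the classical join-closure of $\KT(0)$. Fix a $\deltwo$ order $g$; the goal is to place $A\oplus B$ in $\KT(g)$. Construct a $\deltwo$ order $h$ with $2h(n)+O(1)\leq g(2n-1)$ for all sufficiently large $n$ (for instance $h(n)=\lfloor g(2n-1)/3\rfloor$, padded into an order), so that twice $h$ fits inside $g$ on both even and odd doubled indices. Since $A,B\in\KT(h)$, fix a common constant $c$ with $K(\restr{A}{n}),K(\restr{B}{n})\leq K(n)+h(n)+c$ for every $n$. The heart of the argument is to build, via Kraft-Chaitin, a prefix-free machine $N$ satisfying
\[K_N(\restr{(A\oplus B)}{2n})\leq K(\restr{A}{n})+K(\restr{B}{n})-K(n)+O(1);\]
the hypothesis then gives $K(\restr{(A\oplus B)}{2n})\leq^+ K(n)+2h(n)\leq^+ K(2n)+g(2n)$, and the odd-length case follows by truncating a description of $\restr{(A\oplus B)}{2(n+1)}$ and using the built-in slack in $h$.

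The intuition for the $-K(n)$ savings is that any $\U$-program $\sigma_A$ for $\restr{A}{n}$ implicitly encodes $n=|\U(\sigma_A)|$, so in combining descriptions of the two halves we should pay only once for the common length information. Concretely, at each stage $s$ and for each $n\leq s$, one enumerates into the Kraft-Chaitin set requests of the form $(x\oplus y,\,|\sigma_A|+|\sigma_B|-K_s(n)+d)$ for all pairs $\sigma_A,\sigma_B$ with $\U_s(\sigma_A)=x$, $\U_s(\sigma_B)=y$ of common length $n$ and lengths bounded by $K_s(n)+h_s(n)+c$. The principal obstacle is the weight bound: because reals in $\KT(\deltwo)$ need not be $\deltwo$, the enumeration cannot be restricted to a single effective approximation of $(\restr{A}{n},\restr{B}{n})$ and must range over all observed candidate pairs. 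The length restriction together with the coding theorem $\sum_{\U(\sigma)=x}2^{-|\sigma|}=O(2^{-K(x)})$ and the lower bound $K(x)\geq K(|x|)-O(1)$ is meant to keep the per-$n$ contribution on the order of $2^{-K(n)}$, so that the total is dominated by the convergent series $\sum_n 2^{-K(n)}=O(1)$; verifying this carefully, including the transient mass arising from approximating $K$ and $h$ from above, is the main technical step.
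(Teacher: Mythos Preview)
Your ``only if'' direction via Theorem~\ref{ktfoawttdown} is fine and essentially equivalent to the paper's direct argument.

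For the ``if'' direction your route differs from the paper's and has a real gap. The paper does not build a Kraft--Chaitin set at all: it invokes the Downey--Hirschfeldt--Nies--Stephan counting theorem, that $|S_{n,k}|\le 2^{\mathbf c}2^{k}$ where $S_{n,k}=\{\sigma:|\sigma|=n,\ K(\sigma)\le K(n)+k\}$, and then builds an \emph{explicit} prefix-free machine. On input $0^{k}1\,\sigma\,\alpha\,\beta$ with $|\alpha|=|\beta|=k+\mathbf c$, the machine decodes $n=\U(\sigma)$ and outputs the join of the $\alpha$th and $\beta$th strings of length $n$ to receive a $\U$-description of length $\le|\sigma|+k$. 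With $k=g(n)+b$ (where $g(n)=\lfloor f(2n)/3\rfloor$) this yields $K(\restr{A\oplus B}{2n})\le^{+}K(n)+3g(n)\le^{+}K(2n)+f(2n)$, with no weight calculation needed.

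Your plan instead aims for the sharper inequality $K_N(x\oplus y)\le K(x)+K(y)-K(n)+O(1)$ via a Kraft--Chaitin enumeration with request lengths $|\sigma_A|+|\sigma_B|-K_s(n)+d$. The obstacle you flag is genuine and, as written, unresolved: since $K_s(n)$ is a decreasing approximation, each drop forces you to re-issue every eligible pair at a strictly shorter length, and the cumulative weight from a single pair is governed by $2^{K_{s_0}(n)}$ at the first enumeration stage $s_0$ rather than by $2^{K(n)}$. Combined with the fact that the eligibility window $|\sigma|\le K_s(n)+h_s(n)+c$ is widest exactly when $K_s(n)$ is large (and $h_s$ may oscillate), the na\"ive sum over pairs diverges; the coding-theorem bound $\sum_{|\U(\sigma)|=n}2^{-|\sigma|}=O(2^{-K(n)})$ alone does not rescue it. Making this work would require a much more careful staging (e.g.\ tying the subtracted term to a specific witnessed description of $n$ rather than to $K_s(n)$, and controlling how the eligibility window interacts with convergence times), none of which is in the proposal. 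The paper's index-coding machine avoids all of this: the value $k$ is carried in the input in unary, so neither $K(n)$ nor $h$ needs to be approximated inside the construction.
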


\begin{proof}
Given an $A$ and $B$ in $\KT(\deltwo)$, we take an arbitrary finite-to-one approximable function $f$ and show that $K(\restr{A \oplus B}{n}) \leq^{+} K(n) +f(n)$. Without loss of generality, we can assume that $f(n)$ is monotonic. We note that it suffices to show this inequality holds for just the even $n$, since for any $\s$, $K(\s) = ^{+} K(\s \conc 1)=^{+} K(\s\conc 0)$ and for any $n$, $K(n+1)=^{+} K(n)$. 

We define a new finite-to-one approximable function $g(n)=\lfloor f(2n) /3 \rfloor$. Since $A$ and $B$ are in $\KT(\deltwo)$, in particular they are $K$-trivial up to this new $g$, so for some constants $b_A$ and $b_B$, for every $n$, we have $K(\restr{A}{n})\leq K(n) +g(n) +b_A$ and $K(\restr{B}{n})\leq K(n) +g(n) +b_B$. We let $b=\max\{b_A, b_B\}$. It is a theorem of Downey et al. (Theorem 5.5 in \cite{downhirschniessteph}) that there is a constant $\textbf{c}$ such that for any $k$ the cardinality of the set $S_{n,k}=\{\s: |\s|=n\ \& \ K(\s) \leq K(n) +k\}$ is no more than $2^{\textbf{c}}2^k$. Importantly, it does not depend on $n$. Thus, we know that $\restr{A}{n}$ and $\restr{B}{n}$ are both elements of the set $S_{n, g(n)+b}$, which is relatively small. Moreover, the set $S_{n, g(n)+b}$ is uniformly recursively enumerable in $K(n)$, $g(n)$, and $b$, so we can describe $\restr{A}{n}$ and $\restr{B}{n}$ relatively easily by giving their positions in the enumeration of this smallish set. 

Formally, we define a prefix-free machine $M$ that works as follows. On a string $\tau = 0^k\conc 1 \conc \s\conc \alpha \conc \beta$, where $\alpha$ and $\beta$ are both strings of length $k+\textbf{c}$, $M$ runs the universal machine $\U$ on $\s$ until it converges and then defines $n=\U(\s)$. $M$ interprets $\alpha$ and $\beta$ as binary representations of numbers less than $2^{\textbf{c}+k}$ (here it uses $\langle 0^{\textbf{c}+k}\rangle$ for the number 1 and $\langle 1^{\textbf{c}+k}\rangle $ for the number $2^{\textbf{c}+k}$), and waits for the $\alpha$th and $\beta$th strings of length $n$ to receive descriptions from the universal machine of lengths less than $|\s|+k$. When it finds these two strings, it outputs their effective join. 


$M$ is clearly partial recursive and its prefix-freeness follows from the prefix-freeness of $\U$ and the call to $\U$ on $\s$. If we take a $\tau=0^{g(n)+b}\conc 1 \conc \s \conc \alpha \conc \beta$ where $\s$ is a shortest description of $n$, and $\restr{A}{n}$ and $\restr{B}{n}$ are the $\alpha$th and $\beta$th strings of length $n$ to receive descriptions shorter than length $n+g(n)+b$, then$M(\tau)=\restr{A}{n} \oplus \restr{B}{n}= \restr{A\oplus B}{2n}$. This string $\tau$ has length $g(n)+b+1+K(n)+2(\textbf{c}+g(n)+b)$. Thus, by the universality of $\U$, we get that $K(\restr{A \oplus B}{2n}) \leq^{+} K(n)+ 3g(n) + 2\textbf{c}+ 3b +1$. Since $\textbf{c}$ and $b$ are constants that do not depend on $n$, and $K(n) =^{+} K(2n)$, we can rewrite this as $K(\restr{A \oplus B}{2n})\leq^{+} K(2n) +3g(n)\leq^{+} K(2n) +f(2n)$, by the definition of $g$. This suffices to show that $A\oplus B$ is in $\KT(f)$. 

For the other direction, for any $A$ and $B$, $K(\restr{A}{n})\leq^+ K(\restr{A \oplus B}{2n})$ and for all $n$, $K(n)=^{+}K(2n)$. Thus, if $A\oplus B \in \KT(\deltwo)$, then $K(\restr{A}{n})\leq^+ K(n)+f(2n)$ for any finite-to-one approximable $f$. To show that $A\in\KT(g)$ for a given finite-to-one approximable $g$, we simply take an $f$ such that $f(2n)=f(2n+1)=g(n)$ for all $n$. This is clearly finite-to-one approximable if $g$ is. By a symmetrical argument, $B$ must also be in $\KT(\deltwo)$. 
\end{proof}

By Theorems~\ref{ktfoawttdown} and ~\ref{ktfoajoinclosed}, we get that $\KT(\deltwo)$ is an uncountable ideal in the $wtt$-degrees, so it is not too far removed from being a legitimate `lowness notion.' One rather interesting side note is that $\KT(\deltwo)$ and $\LK(\deltwo)$ exhibit exactly the opposite behavior in terms of being Turing ideals. $\KT(\deltwo)$ is closed under $\oplus$ but for any real $A$ there is a real $B\in \KT(\deltwo)$ with $B\geq_T A$, while the set $\LK(\deltwo)$ is closed downwards under $\leq_T$ but for any $A$ there are $B$ and $C$ in $\LK(\deltwo)$ with $B\oplus C \geq_T A$ (so $\LK(\deltwo)$ is cofinal in the $T$-degrees under $\oplus$). This goes some way to suggest how each definition contributes to the various closure properties of $\KT(0)=\LK(0)$ and demonstrates how important Nies's Theorem is to our understanding of these properties. 

\section{Weak Reducibilities}
  
We now consider these sets of reals under other, weaker reducibility notions. A natural reducibility notion under which to consider the reals in $\KT(\deltwo)$ is $K$-reducibility, introduced by Downey, Hirschfeldt, and LaForte in \cite{kreduce}. We say $A$ is \emph{K-reducible} to $B$ ($A \leq_K B$) if for all $n$, $K(\restr{A}{n})\leq^{+} K(\restr{B}{n})$. Equivalently, we often say that $B$ is \emph{K-above} $A$ or that $A$ is \emph{K-below} $B$. From the definition it is immediate that $\KT(g)$ for any $g$ and $\KT(\deltwo)$ are closed downwards under $\leq_K$. Because a $K$-reduction does not need to have a concrete object as a witness (the way a Turing-reduction needs a Turing functional), it is not necessarily the case that the set of reals reducible to a given real will always be countable. In fact this is often not the case, as every Martin-L\"of random real has an uncountable lower cone in the $K$-degrees\cite{yudingdowney}. The following theorem shows that for reals in $\KT(\deltwo)$ at least, this is not the case. A precise characterization of those reals with countable lower $\leq_{K}$-cones is at this time still an open problem.

\begin{thm}[with F. Stephan]\label{lowerkcone}
If $A$ is in $\KT(\deltwo)$, then $A$ has a countable lower $\leq_K$-cone. 
\end{thm}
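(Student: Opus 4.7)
The plan is to argue by contradiction. Suppose the lower $\leq_K$-cone of $A$ is uncountable; since it decomposes as $\bigcup_{c\in\N}E_c$ with
\[
E_c=\{B\in 2^{\omega}:\forall n,\ K(B\restriction n)\leq K(A\restriction n)+c\},
\]
some $E_c$ must be uncountable. I would first observe that $E_c$ is a closed subset of $2^{\omega}$, being the intersection of the clopen conditions $\{B:K(B\restriction n)\leq K(A\restriction n)+c\}$. By Cantor--Bendixson an uncountable closed subset of $2^{\omega}$ contains a non-empty perfect subset, so the tree
\[
T_c=\{\sigma\in\twow:\forall m\leq|\sigma|,\ K(\sigma\restriction m)\leq K(A\restriction m)+c\}
\]
contains a perfect subtree; in particular, after pruning to the subtree of extendable nodes, $|T_c\cap 2^n|\to\infty$ as $n\to\infty$.

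Next I would apply the counting bound invoked in the proof of Theorem~\ref{ktfoajoinclosed} (Theorem~5.5 of \cite{downhirschniessteph}): there is a constant $\mathbf{c}$ such that $|\{\sigma\in 2^n:K(\sigma)\leq K(n)+k\}|\leq 2^{\mathbf{c}+k}$ for all $n,k$. Since every $\sigma\in T_c\cap 2^n$ satisfies $K(\sigma)\leq K(A\restriction n)+c$, taking $k=K(A\restriction n)-K(n)+c$ yields $|T_c\cap 2^n|\leq 2^{\mathbf{c}+c+K(A\restriction n)-K(n)}$, and rearranging gives $K(A\restriction n)-K(n)\geq \log_{2}|T_c\cap 2^n|-c-\mathbf{c}$. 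Combining with the previous step, $K(A\restriction n)-K(n)\to\infty$.

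The final, most delicate step is to turn this growth into a $\deltwo$ order $f$ witnessing $A\notin\KT(f)$, contradicting $A\in\KT(\deltwo)$. The main obstacle is that $T_c$ is defined using $A$, so $|T_c\cap 2^n|$ is only $\deltwo$-in-$A$, while $f$ must be $\deltwo$ in the absolute sense. My plan is to adapt the guessing-tree apparatus used in the proof of Theorem~\ref{anybktrivall}: process pairs $\langle e,s\rangle$ as candidate $\deltwo$ approximations to $K$ along with guesses at finite-to-one majorants of $K(A\restriction n)$ (such majorants exist in abundance since $A\in\KT(\deltwo)$), and use these to $\deltwo$-enumerate lower bounds on $|T_c\cap 2^n|$. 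Along a $\deltwo$-identifiable subsequence $n_1<n_2<\cdots$ on which the guesses correctly track $A$'s true behavior, one confirms $|T_c\cap 2^{n_k}|\geq 2^{2k}$. Setting $f(n)=k$ for $n\in[n_k,n_{k+1})$ then yields a $\deltwo$ order for which $K(A\restriction n_k)-K(n_k)\geq 2k-O(1)>f(n_k)+c'$ holds eventually for every constant $c'$, giving the desired contradiction.
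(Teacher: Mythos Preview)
Your steps 1--3 are correct and in effect reprove the Barmpalias--Vlek direction that the paper simply cites: an uncountable lower $\leq_K$-cone forces $K(\restr{A}{n})-K(n)\to\infty$, i.e., $A$ is not infinitely often $K$-trivial. The gap is entirely in step~4, where you must pass from ``not i.o.\ $K$-trivial'' to ``not in $\KT(\deltwo)$''. Your plan uses $\deltwo$ \emph{majorants} of $K(\restr{A}{n})$ coming from the hypothesis $A\in\KT(\deltwo)$, but these point the wrong way: replacing $K(\restr{A}{m})$ in the defining condition of $T_c$ by an upper bound relaxes the constraint and yields a $\deltwo$ \emph{superset} of $T_c$, so its size gives no lower bound on $|T_c\cap 2^n|$. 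To certify $\sigma\in T_c$ in a $\deltwo$ way you would need $\deltwo$ \emph{lower} bounds on $K(\restr{A}{m})$ that themselves outpace $K(m)$, and producing such bounds is tantamount to already exhibiting a $\deltwo$ order $f$ with $A\notin\KT(f)$. Nothing in the guessing-tree apparatus of Theorem~\ref{anybktrivall} supplies this; that construction builds reals \emph{inside} $\KT(\deltwo)$, it does not extract $\deltwo$ information from an arbitrary $A$ assumed to lie there. As written, step~4 restates the target rather than proving it.

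The paper's route is quite different and avoids this obstacle entirely. It does not argue by contradiction or analyze $T_c$; instead it shows directly, by a compression trick, that membership in $\KT(\log\log n)$ for the single recursive order $\log\log n$ already forces $A$ to be infinitely often $K$-trivial. From $K(\restr{A}{2^m})\leq^{+}K(m)+\log m<m$ for large $m$, the shortest $\U$-description $\s$ of $\restr{A}{2^m}$ has length below $m$, hence codes a number $\text{num}(\s)<2^m$; one then recovers $\restr{A}{\text{num}(\s)}$ from $\s$ alone, giving $K(\restr{A}{\text{num}(\s)})\leq^{+}K(\text{num}(\s))$ for infinitely many values. The countable-cone conclusion then follows by the cited Barmpalias--Vlek result. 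No bespoke $\deltwo$ order needs to be constructed.
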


\begin{proof}
We show that if $A\in \KT(\deltwo)$ then $A$ is infinitely often $K$-trivial (i.e. for some constant $c$ there are infinitely many $n$ satisfying $K(\restr{A}{n})\leq K(n) +c$). Infinitely often $K$-trivial reals have been studied by Barmpalias and Vlek \cite{barmpaliasvlek}, and in particular they have shown that if $A$ is infinitely often $K$-trivial, then any $B\leq_{K} A$ is $\deltwo$ in $A$, and so $A$'s lower $\leq_K$-cone must be countable.

In fact, $A\in \KT(\log\log n)$ suffices to ensure that $A$ is infinitely often $K$-trivial. To show this, we assume $A\in\KT(\log \log n)$ and find infinitely many $n$ where $K(\restr{A}{n})\leq^+ K(n)$. 

Since $A\in\KT(\log \log n)$, for any $m$, $K(\restr{A}{2^m})\leq^+ K(2^m) +\log \log 2^m =^{+} K(m)+\log m$. Now, for any number $m$, $K(m)$ is always up to a constant less than $\log m +2 \log\log m$ (see, for example Chapter 2 of \cite{nies}), so we get that $K(\restr{A}{2^m})\leq^{+} 2\log m +2\log\log m$, and for large enough $m$ this quantity is less than $m$. 
Thus, there is some $\s\in\twow$ with $|\s|<m$ such that $\U(\s)=\restr{A}{2^m}$. Because $|\s|<m$, $\s$ can be interpreted as the binary representation of a natural number, $\text{num}(\s)$, with value less than $2^m$.
 Now for this $\s$, $K(\restr{A}{\text{num}(\s)})\leq^+ K(\s)$, since from a description for $\s$ one can run the universal machine on $\s$ to get $\restr{A}{2^m}$ and then compute $\text{num}(\s)$ and truncate this string to $\restr{A}{\text{num}(\s)}$. It is a recursive process to go from a binary string $\s$ to the natural number it is a binary representation of, so $K(\s)=^+ K(\text{num}(\s))$, and so $K(\restr{A}{\text{num}(\s)})\leq^+ K(\text{num}(\s))$. 
For every sufficiently large $m$, such a $\s$ exists, and they are necessarily distinct for distinct $m$, so $A$ is infinitely often $K$-trivial. Thus, by the result of Barmpalias and Vlek, the set $\{B : B\leq_K A\}$ is countable. 

\end{proof}

$K$-reducibility is a way to preorder reals by their relative initial segment complexities, there is an analogous reducibility notion for relative compressive power. We say a real $A$ is $LK$\emph{-reducible} to $B$ ($A \leq_{LK} B$) if up to an additive constant for all $\s\in\twow$, $K^{B}(\s)\leq^+ K^{A}(\s)$, that is, $A$ compresses strings at most as well as $B$ does. It is clear that the sets $\LK(f)$ and $\LK(\deltwo)$ are closed downwards under $\leq_{LK}$. Analogously to infinitely often $K$-trivial reals, we have reals that are \emph{weakly low for K}.

\begin{definition}
A real $A$ is \emph{weakly low for K} if there is a $c\in\N$ such that for infinitely many $\s\in\twow$, $K(\s)\leq K^{A}(\s)+c$. 
\end{definition}

Miller \cite{millerweaklowk} showed that these reals correspond to the low for $\Omega$ reals defined by Nies, Stephan, and Terwijn in \cite{lowforomega}. We call a real \emph{low for} $\Omega$ if for any $c$ there is an $n_c$ such that $K^A(\restr{\Omega}{n_c})<n_c-c$, where $\Omega$ is the measure of the domain of the universal machine $\U$. In the same paper Miller showed that these weakly low for $K$ reals had countable lower cones in the $LK$-degrees, and conjectured that these were in fact the only reals that do. Barmpalias and Lewis \cite{barmlewislkdegrees} settled this question in the affirmative. Following Theorem~\ref{lowerkcone} we could hope to show that the reals in $\LK(\deltwo)$ all have countable lower $LK$-cones, but unfortunately this is not the case. The rest of this section comprises a proof of the separation of $\LK(\deltwo)$ and weak lowness for $K$. One direction follows easily from the existence of Martin-L\"of random reals that are low for $\Omega$, since none of these can be in $\LK(\deltwo)$. The other direction is more complicated. 
\\

\begin{thm}\label{lkdeltwonotweaklowk}
There are reals in $\LK(\deltwo)$ that are not low for $\Omega$, and so not weakly low for $K$. 
\end{thm}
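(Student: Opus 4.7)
The plan is to produce a single $A \in \LK(\deltwo)$ with $A \geq_T \Omega$. For such an $A$, the $A$-oracle prefix-free machine that on input $n$ simulates a fixed reduction $\Phi^A = \Omega$ to print $\restr{\Omega}{n}$ gives $K^A(\restr{\Omega}{n}) \leq^+ K(n) = O(\log n)$, which is less than $n-c$ for every $c$ and every sufficiently large $n$. Hence $\Omega$ is not Martin-L\"of random relative to $A$, so $A$ is not low for $\Omega$, and by Miller's theorem $A$ is not weakly low for $K$.

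To build such an $A$ I would run the $\LK(\deltwo)$ analog of Theorem~\ref{anybktrivall}, coding $\emptyset'$ into the coding successors rather than an externally given real $B$. Structurally: build a tree $T$ with guessing nodes $\gamma(\alpha)$ placed only once $\psias$ has reached $2|\alpha|$, and at each $\gamma(\alpha)$ place two coding successors, committing the path to the one whose label matches $\emptyset'_s(|\alpha|)$. For each $e$, maintain a Kraft-Chaitin set $L_e$ that witnesses $K(\sigma)\leq^+ K^A(\sigma)+\phi_e(|\sigma|)$: for each prefix-free $A$-description $\rho$ of $\sigma$ and each $\alpha$ with $\gamma(\alpha)\preceq\rho$ and $e\in_g\alpha$, enqueue the request $(\sigma,|\rho|+\phi_e(|\sigma|))$; the resulting mass is bounded by the geometric sum $\sum_{i} 4\cdot 2^{2i}\cdot 2^{-|\rho|-4i}$ that appears in the proof of the $\mu(\dom(M_e))\leq 8$ lemma inside Theorem~\ref{anybktrivall}. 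When $\emptyset'_s(|\alpha|)$ changes, shift $\gamma(\alpha)$ upward exactly as in that theorem's marker-movement scheme; because $\emptyset'$ is $\deltwo$, each bit stabilizes and every $\gamma(\alpha)$ is injured only finitely often. Along the correct path, $A$ can simulate the construction and observe when each $\gamma(\alpha)$ reaches its final position (as at the end of the proof of Theorem~\ref{anybktrivall}), so $A$ recovers $\emptyset'$ bit by bit and hence computes $\Omega \equiv_T \emptyset'$.

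The main obstacle is checking that pinning the coding to $\emptyset'_s$, rather than leaving the bit at each coding successor as a free choice of an external $B$, does not spoil the Kraft-Chaitin bookkeeping for the $L_e$. The point is that the mass charged to $L_e$ per pass through $\gamma(\alpha)$ depends only on $|\alpha|$ and the length of the $A$-description, not on the label at the coding successor, so the geometric-sum bound is insensitive to this restriction; the $\emptyset'_s$-driven movements of $\gamma(\alpha)$ contribute only finitely many fresh requests per coding level because each $\emptyset'(|\alpha|)$ changes only finitely often. With the $B_\alpha$-analog and $R^e_i$-analog requirements interleaved by priority exactly as in Theorem~\ref{anybktrivall}, the correct path $A$ satisfies both $A\in\LK(\deltwo)$ and $A\geq_T \emptyset'$, which gives the theorem.
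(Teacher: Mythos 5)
Your plan is to build a single $A\in\LK(\deltwo)$ with $A\geq_T\Omega$, but such an $A$ cannot exist, so the approach fails at the outset. As Section~2 of the paper observes, $\LK(f)$ is closed downwards under $\leq_T$ for every $f$ (an oracle can simulate any compression that a real it computes would do), and hence $\LK(\deltwo)=\bigcap_f\LK(f)$ is also closed downwards under $\leq_T$. If some $A\in\LK(\deltwo)$ satisfied $A\geq_T\Omega$, downward closure would put $\Omega$ itself in $\LK(\deltwo)$. But $\Omega$ is Martin-L\"of random, and no Martin-L\"of random real is low for $K$ up to $\log |\sigma|$: for ML-random $R$ one has $K(\restr{R}{n})\geq^+ n$ while $K^R(\restr{R}{n})\leq^+ 2\log n$, so $\LK(\log)$ excludes $R$. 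This is exactly the asymmetry the paper emphasizes when contrasting Theorem~\ref{anybktrivall} with the $\LK$ case: unlike $\KT(\deltwo)$, the class $\LK(\deltwo)$ is a Turing-downward-closed set that avoids the ML-randoms, so it is very far from cofinal in the Turing degrees and can never contain a real above $\Omega\equiv_T\emptyset'$.

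This is why the actual construction in the paper never attempts to have $A$ compute $\Omega$. Instead it enumerates an \emph{oracle} Kraft–Chaitin set $M$ that places increasingly short $A$-descriptions of (approximations to) $\restr{\Omega}{m}$ onto paths through the tree, so that $K^A(\restr{\Omega}{m})\leq m-c$ is achieved for every $c$ at some $m$, defeating lowness for $\Omega$ directly without any $\leq_T$ claim. The price is precisely the tension that you dismiss in your final paragraph: once you hand $A$ an artificially short description of some string (here an initial segment of $\Omega$), the $\LK$ requirement obliges the oracle-free sets $L_e$ to match it, so you are paying to compress the very strings you deliberately chose to make compressible relative to $A$. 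Your claim that the $L_e$ bookkeeping is ``insensitive'' to the coding is where this is hidden; in the paper it is managed by the large gaps $c_i=10i^4$ and the reservoir analysis, which have no analog in the Theorem~\ref{anybktrivall} bookkeeping you are transplanting. Any correct proof must confront this trade-off rather than route around it via a Turing reduction to $\Omega$.
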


\begin{proof}
The construction will be similar to the one in the proof of Theorem~\ref{anybktrivall}. The idea is to build a perfect binary branching trees whose branching nodes will represent guesses as to which of the $\phies$ are finite-to-one approximations and alongside this to build a Kraft-Chaitin set to witness that each path is low for $K$ up to those $\phies$ which it guesses are finite-to-one approximations. To ensure that the path is not low for $\Omega$, we enumerate an oracle Kraft-Chaitin set using potential paths through the tree as oracles and giving short descriptions to initial segments of $\Omega$ relative to these paths. $\Omega$ is $\deltwo$, so we can approximate its initial segments recursively. Tension arises between trying to put short descriptions of initial segments of $\Omega$ onto paths through our tree while also trying to match descriptions relative to paths through the tree with almost-the-same-length descriptions by our oracle-free Kraft-Chaitin sets. The construction will generate a lot of waste mass into all the Kraft-Chaitin sets since we have at best an approximation to $\Omega$ and to the finite-to-one approximable $\phies$'s. This part will be more difficult than the proof above, since here we are trying to use an oracle-free Kraft-Chaitin set to pay for descriptions of strings matching those relative to an oracle. As we change the tree the oracles will change, so our opponent will get mass back with which to challenge us while the mass we spent will have been wasted. 

Branchings in the tree $T$ will alternate between \emph{guessing nodes}, which are associated with guesses as to the behavior of some $\phie$, and \emph{compression intervals}, collections of $i$-many branchings (so $2^i$-many top-level nodes) on which, for some $n$, we place descriptions of possible $\restr{\Omega}{n}$ of length $n-i$. We distribute the descriptions as evenly as possible among the top-level nodes of a compression interval, to ensure that the measure of the domain of the machine we construct is finite with respect to each of the paths as an oracle.


In the proof of Theorem~\ref{anybktrivall} we waited till the various $\phies$ took values greater than $2i$; here we will use the sequence $c_i=10i^4$ for the same purpose. We use similar terminology to the previous proof. 

For a given $\alpha\in \twow$, we will say $e$ is a \emph{member} of $\alpha$ if $\alpha(e)=1$ and denote this $e\in \alpha$. For each $\alpha\in\twow$ we define a function that guesses that $\alpha$ is correct: $\psi_{\alpha,s}(\s)=\min\{\phies(\s): e\in \alpha\}$.

We will say a path $\rho$ though $T$ \emph{follows} a string $\alpha$ through the guessing nodes of $T$ if for each $i\leq|\alpha|$, $\eta_i \conc \alpha(i) \preceq \rho$, where $\eta_i$ is the $i$th guessing node that is an initial segment of $\rho$. Such a path is \emph{minimal} if it if minimal under the $\preceq$ relation (i.e. $|\rho|=|\eta_{|\alpha|}|+1$).

The requirements that we will try to meet are:
\begin{flalign*}
R_{\alpha}: &\text{ For all paths through } T \text{ that follow } \alpha \text{ at the \emph{guessing} nodes, there is a level}& \\
 &\text{where they all branch } (2|\alpha |)+1 \text{-many more times}&
\end{flalign*}
for all $\alpha \in \twow$, and
\begin{flalign*}
S^e_i: &\text{ For all } \s \text{ with } c_i \leq \phi_e(\s) < c_{i+1}, \text{ we have }K(\s) \leq^+ K^{A}(\s) + c_i \text{ for all } A \in[T]&
\end{flalign*}
for all $i,e \in \N$ with $i\geq e$. 
\begin{flalign*}
N_{\alpha}: &\text{ For any minimal path } \rho \text{ that follows } \alpha \text{ through the guessing nodes of $T$}& \\
& \text{ there is an extension, } \rho' \text{ to an } (|\alpha |+1) \text{st}&\\
& \text{ guessing node and an } m \text{ such that } K^{\rho'}(\restr{\Omega}{m})\leq^{+} m-|\alpha |&
\end{flalign*}
for all $\alpha \in \twow$ with $|\alpha|\geq 1$. 

Note that we only have $S^e_i$ requirements for $i \geq e$. This prevents $\phi_e$ from injuring the tree below the guessing node for $e$. 

We order the requirements $R_{\langle \rangle}, S^0_0, R_{\langle 0 \rangle}, R_{\langle 1 \rangle}, N_{\langle 0 \rangle}, N_{\langle 1 \rangle}, S^0_1, S^1_1, R_{\langle 00 \rangle}, R_{\langle 01 \rangle},$ $R_{\langle 10 \rangle}, R_{\langle 11 \rangle}, N_{\langle 00 \rangle}, N_{\langle 01 \rangle}, N_{\langle 10 \rangle},$ 
$N_{\langle 11 \rangle}, S^0_2, S^1_2, S^2_2, \hdots$. 

The various $S^e_i$ requirements will be concerned with different $\s$ throughout the construction as the approximations to the $\phies$ settle. We will say $S^e_i$ is $e$\emph{-responsible} for $\s$ if at stage $s$ we have $c_i \leq \phies(\s) < c_{i+1}$ and $\s$ is one of the length-lexicographically first $s$ elements of $\twow$. Note that for each $e$ for a given string $\s$ and stage $s$ at most one $S^e_i$ is $e$-responsible for $\s$ at $s$. As in the last proof, we will want to keep track of our guessing levels and to this end we will use a collection of markers $n_{\alpha,s}$. Each $n_{\alpha,s}$ will mark the the end of the compression interval where the paths that follow $\alpha$ through the first $|\alpha|$-many guessing nodes of $T_s$ have descriptions of $\restr{\Omega}{m}$ that are shorter by $|\alpha|$, and will be the next guessing level.  As before, to \emph{kill} a node is to make a commitment to never add nodes above it into $T$. Nodes in $T_s$ that have not been killed are \emph{living}. A node is a \emph{leaf node} at stage $s$ if neither of its successors is in $T_s$. 

We now give the strategies for satisfying each of our requirements. 

An $R_{\alpha}$ requirement \emph{requires attention} at a stage $s$ if the guessing level $n_{\alpha,s}$ is not defined. The strategy for meeting $R_{\alpha}$ is

\begin{enumerate}
\item Let $n$ be some number larger than any seen before in the construction
\item For every living leaf node, $\eta$, of $T_s$ that follows $\alpha$ through the first $|\alpha|$-many guessing nodes, add the path $\eta\conc \beta \conc \gamma$ to $T_s$ to get $T_{s+1}$, where $|\eta|+|\beta|=n$, $\beta(i)=0$ for all $i$ where it is defined, for every $\gamma \in 2^{2|\alpha|+1}$. 
\item Let $n_{\alpha,s+1}=n+2|\alpha|$. This is now a guessing level. 

\end{enumerate}

An $N_{\alpha}$ requirement \emph{requires attention} at a stage $s$ if one of the minimal nodes $\rho$ that follows $\alpha$ through the guessing nodes of $T_s$ does not have an extension $\rho'\in T_s$ to an $|\alpha|+1$st guessing node for which the request $( \restr{\Omega_s}{m_{\alpha,s}}, m_{\alpha,s}-|\alpha|, \rho')$ has been put into $M$, for the current $m_{\alpha,s}$. The strategy for meeting this requirement is

\begin{enumerate}

\item If $m_{\alpha,s}$ is undefined, pick some $m>2|\alpha|+1$ that is also larger than anything seen so far in the construction and let $m_{\alpha,s}=m$. 

\item For the current $m_{\alpha,s}$, for the $|\alpha|+1$st guessing node, $\rho'$ in $T_s$ that follows $\alpha$ through the lower $|\alpha|$ guessing nodes and is the leftmost that has had the least amount of mass put into $M$ so far, put the request$(\restr{\Omega_s}{m_{\alpha,s}},m_{\alpha,s}-|\alpha|, \rho')$ into $M_{s+1}$. 
\end{enumerate}

An $S^e_i$ requirement \emph{requires attention} at a stage $s$ if there is a $\s$ that it is $e$-responsible for and there is a living partial path $\gamma$ in $T_s$ that, if it goes through at least $e$-many guessing nodes then it takes the `$1$' branch after the $e$th one, and we have $K_{s}^{\gamma}(\s)+\phies(\s)$ is less than the shortest description of $\s$ in $L_{e,s}$. This means that the shorter description of $\s$ is on a path that either has not reached a guessing node for $\phies$ or is guessing that it is a finite-to-one approximation, so we will need to act. The strategy for meeting this requirement is

\begin{enumerate}
\item Find the length-lexicographically least $\s$ and for this $\s$ the length-lexicographically least $\gamma$ that are causing $S^e_i$ to require attention. By the choice of these as length-lexicographically least, we must have that the use of the computation $\U_s^{\gamma}(\tau)\downarrow=\s$ that is causing $S^e_i$ to act is $|\gamma|$. 

\item For this $\gamma$, let $\alpha$ be maximal such that $\gamma$ follows $\alpha$ through the guessing nodes of $T_s$. In other words, $\alpha$ is the collection of guesses that are being made on the path $\gamma$. 

\item If $|\alpha| <e$, then $\gamma$ has not guessed about the behavior of $\phies$, but we know $i \geq e$, so we can afford to pay for a description of $\s$ on this part of the tree anyway. Put a request $( \s, K_{s}^{\gamma}(\s)+c_i )$ into $L_{e,s}$ to get $L_{e,s+1}$. 

\item Otherwise, $|\alpha| \geq e$, and since $S^e_i$ requires attention, we must have $\alpha(e)=1$. We now consider whether $|\alpha|\leq i+1$, in order to check whether $\gamma$ has too many guessing nodes and will cause us to injure the tree. If $|\alpha|\leq i+1$, then we have not yet branched for the $i(i+1)$th time (where the $(i+1)$st guessing node would be) so we can pay. Put a request $( \s, K_{s}^{\gamma}(\s)+c_i )$ into $L_{e,s}$ to get $L_{e,s+1}$. 

\item Otherwise, $|\alpha|>i+1$, so $\gamma$ is longer than the $(i+1)$st guessing level. Since $\phies(\s)<c_{i+1}$, this is too high, so we

\begin{enumerate}

\item Injure $R_{\restr{\alpha}{i+1}}$ and run the Injury Subroutine for it.

\item Let $T_{s+1}=T_s$, $L_{e,s+1}=L_{e,s}$.

\end{enumerate}

\end{enumerate}

The Injury Subroutine for an $R_{\alpha}$ strategy at stage $s$ is

\begin{enumerate}


\item For every minimal path $\eta$ that follows $\alpha$ through the guessing nodes of $T_s$, find the living leaf node $\eta' \succ \eta$ such that $\sum\limits_{\tau:\U_s^{\eta'}(\tau)\downarrow, \U_s^{\eta}(\tau)\uparrow} 2^{-|\tau|}$ is maximal. If there is more than one, take $\eta'$ to be the leftmost. 

\item For every pair $(\eta,\eta')$ found above, keep $\eta'$ alive in $T_s$ and kill all other extensions of $\eta$. 

\item For every $\beta \succeq \alpha$, set $R_{\beta}$ to requiring attention (i.e. set $n_{\beta,s+1}$ to be undefined) and set $m_{\beta,s+1}=m_{\beta,s}+1$. This injures all these $R_{\beta}$ and $N_{\beta}$

\end{enumerate}

The skeleton of the Construction is

\textbf{Stage 0}: Set $T_0=\emptyset$, $M_0= \emptyset$, $L_{e,0}=\emptyset$ and for every $e$,  and $n_{\alpha,0}$ undefined for all $\alpha$.\\

\textbf{Stage $s+1$}:
\begin{enumerate}
\item Compute $\phi_{e,s+1}(\s)$ and $K_{s+1}^{\eta}(\s)$ for all living branches $\eta$ in $T_s$, the first $s+1$-many $\s$'s, and $e\leq s+1$.

\item In order of priority, run the strategy for each of the first $s+1$-many requirements that require attention, including executing the Injury Subroutine as necessary. 

\item For any $n_{\alpha,s}$ or $m_{\alpha,s}$ that were not affected, set $n_{\alpha,s+1}=n_{\alpha,s}$ and $m_{\alpha,s+1}=m_{\alpha,s}$
\end{enumerate}

Now let $T=\bigcup\limits_s T_s$, $L_{e}=\bigcup\limits_s L_{e,s}$, $n_{\alpha}=\lim\limits_s n_{\alpha,s}$, $M=\bigcup\limits_s M_s$, $m_{\alpha}=\lim\limits_s m_{\alpha,s}$. This completes the construction. The verification follows. 

As in the proofs of Theorem~\ref{anybktrivall}, we will not be able to ensure that all requirements are satisfied, but only those that are correct about their guesses. We would like to show that every path through the subtree of $T$ generated by all the correct guesses about the $\phies$ is in $\LK(\deltwo)$ and at least one of them is not low for $\Omega$. We call $\alpha$ \emph{correct} if for every $e\leq |\alpha|$, $e\in\alpha$ if and only if $\phies$ is a finite-to-one approximation.

\begin{lemma}\label{finiteinjuryall}
For all correct $\alpha$, $R_{\alpha}$ and $N_{\alpha}$ are injured only finitely often. 
\end{lemma}
\begin{proof}
By construction, for any $\alpha$, the requirements $R_{\alpha}$ and $N_{\alpha}$ can only be injured by $S^e_i$ requirements with $e\in\alpha$ and $i<|\alpha|$, of which there are only finitely many. Assuming $\alpha$ is correct, the only $\phi_e$ that can cause these injuries are then indeed total finite-to-one approximations. Since $R_{\alpha}$ and $N_{\alpha}$ are always injured together, it will suffice to show that $R_{\alpha}$ is injured only finitely often. 

To derive a contradiction, first let us assume there is some $R_{\alpha}$ for a correct $\alpha$ that is injured infinitely often, and take $\alpha$ to be a minimal such string. Each of the $S^e_i$'s that can injure $R_{\alpha}$ only ever has $e$-responsibility for finitely many $\s$ since $\phi_e$ is a finite-to-one approximation, so there must be at least one $\s$ that is the cause of infinitely many injuries to $R_{\alpha}$. Let us take the length-lexicographically least such $\s$. 

Let us assume we are at a stage $s$ such that $\phies(\s)$ and $K_s(\s)$ have settled and such that no $R_{\beta}$ for $\beta \prec \alpha$ will ever be injured again. Since $\s$ causes infinitely many more injuries to $R_{\alpha}$, it must be the case that $S^e_{i}$ has $e$-responsibility for $\s$ for all stages $t \geq s$, for $i=|\alpha|-1$. 

Now, each run of the Injury Subroutine for $R_{\alpha}$ at some stage $t$ will, for each minimal $\eta$ that follows $\alpha$ through the guessing nodes of $T_t$, keep at least the most massive branch above $\eta$ alive and kill all other branches above $\eta$. There are always $1\cdot 4\cdot 16 \hdots 2^{2|\alpha|}$ many living nodes at height $n_{\alpha,t}$ that follow $\alpha$ through the guessing nodes, and a run of the Injury Subroutine for $R_{\alpha}$ extends each of these to a leaf node in the way that maximizes the mass placed along it. Since, by assumption, no earlier $R_{\beta}$ will ever be injured again, the tree below these paths never change so this mass is never lost. We are at a stage $t$ such that $K_t(\s)$ has already converged, so each injury caused by $\s$ must be caused by our finding a description of $\s$ of length less than $K(\s)$ along one of these paths. That is, at least $2^{-K(\s)}$ much mass must converge on one of the paths at height $n_{\alpha,t}$ that follow $\alpha$ through the guessing nodes. This is a fixed amount of mass that is added infinitely often to a finite number of oracles, so the measure of the domain of $\U$ relative to one of these oracles is infinite. This is a contradiction.

Thus, $R_{\alpha}$ (and so $N_{\alpha}$) can only be injured finitely often.

\end{proof}

\begin{lemma}\label{reqsatisfiedall}
For all correct $\alpha$ and all $e$ such that $\phies$ is a total finite-to-one approximation, the requirements $R_{\alpha}$, $N_{\alpha}$, and $S^e_i$ are all eventually satisfied. 
\end{lemma}

\begin{proof}
Some requirements may cause infinitely many injuries to requirements above them in the ordering or require attention infinitely often. However, at any stage $s$ of the construction we allow any of the first $s$ requirements to act, and our actions affect different parts of the tree $T_s$ (the paths that follow different $\alpha$'s through the guessing nodes), so the poorly behaved requirements will not interfere with our actions in satisfying the correct ones. 

By the above lemma, for correct $\alpha$, the requirements $R_{\alpha}$ and $N_{\alpha}$ are only injured finitely often. After the last injury, $R_{\alpha}$ will need to act once more before it is satisfied, while $N_{\alpha}$ may need to act several times as it waits for $\restr{\Omega}{p_{\alpha}}$ to converge ($p_{\alpha}$ only changes when $N_{\alpha}$ gets injured). Eventually this happens and after that stage we will put a description of $\restr{\Omega}{p_{\alpha}}$ onto one of the relevant paths and satisfy $N_{\alpha}$ permanently.

If $\phies$ is a finite-to-one approximation, then, also by the proof above, the requirement $S^e_i$ can only cause finitely many injuries. It's actions that do not cause injuries are just those in steps 3. and 4. of its strategy and these consist of putting a request $(\s, K^{\gamma}(\s)+c_i)$ into $L_e$, so some $\s$ it is $e$-responsible for and some partial path $\gamma$ through $T_s$. Since $\phies$ is a finite-to-one approximation, there are only finitely many $\s$ for which $S^e_i$ is ever $e$-responsible. For each of these, there are only finitely many requests to put into $L_e$, since we only need to put new ones in if the new $K^{\gamma}(\s)$ is less than all previous ones. Thus, $S^e_i$ will need to act only finitely often.  


\end{proof}

Lemmas~\ref{finiteinjuryall} and ~\ref{reqsatisfiedall} give us that the strategies relevant to the construction of the true subtree will eventually stop acting, but it remains to be shown that the Kraft-Chaitin sets enumerated by these strategies have bounded mass (and so produce the required machines). We start with the machine $\mathcal{M}$ whose existence will be witnessed by $M$.

\begin{lemma}
For every real $A$, the sum $\sum\limits_{(\s,p,\eta)\in M, \eta \prec A}2^{-p}<2$. 
\end{lemma}

\begin{proof}
To prove this lemma we consider the amount of mass a given $N_{\alpha}$ requirement can contribute to $M$ for an oracle $A$. It is clear that for different $\alpha$ of the same length, the sets of oracles the $N_{\alpha}$'s use will be disjoint, since they will be paths through $T$ that take different directions at at least one of the guessing nodes. Thus, it suffices to show that a given $N_{\alpha}$ will add at most $2^{-|\alpha|+1}$ to any oracle, and so a path that receives mass from many $N_{\alpha}$'s will receive no more than $2$ total mass. 

Let us first fix an $\alpha$ of length at least $1$ and let $d=|\alpha|$. For any stage $s$ in the construction where $N_{\alpha}$ is active, we have some $m_{\alpha,s}$ which is always larger than $2|\alpha|+1$. Now, there are na\"{i}vely $2^{m_{\alpha,s}}$ many possible strings that could be $\restr{\Omega}{m_{\alpha,s}}$. When $N_{\alpha}$ requires attention, it puts a request $(\restr{\Omega}{m_{\alpha,s}}, m_{\alpha,s}-d,\eta)$ into $M_s$ for a node $\eta$ in $T_s$ at level $n_{\alpha,s}$ that follows $\alpha$ through the guessing nodes which has had the minimum amount of mass already placed on it by $N_{\alpha}$. This contributes $2^{-m_{\alpha,s}+d}$ much mass to that path. Without any injuries, this strategy would distribute $2^{m_{\alpha,s}} \cdot 2^{-m_{\alpha,s}+d}=2^{d}$ much mass evenly among the $2^{2d}$ many branches in this compression interval, giving $2^{-d}$ to each. Of course, injuries will complicate matters. A run of the Injury Subroutine will fix an amount of this mass onto each of the paths that follow $\alpha$ through the guessing nodes of $T_{s+1}$, which will be initial segments of the new nodes at $n_{\alpha,s+1}$ (after $R_{\alpha}$ acts again). In principle, this is fine. We now have $2^{2d}$ many nodes at the new $n_{\alpha,s+1}$ and $N_{\alpha}$ has put some amount $p$ of mass onto an initial segment of all of them and will continue by sharing its remaining mass equally over them. In the ideal case this situation is no different from continuing to share the mass over the nodes before the injury. They each had (almost) $p$ much mass already placed on them by $N_{\alpha}$, and the fact that $N_{\alpha}$ is continuing to act means whatever potential initial segments of $\Omega$ they paid for have since been rejected. 

The trouble, of course, is in the `almost.' If some node at $n_{\alpha,s}$ had received more mass than the others before the injury occurred, it is possible for the Injury Subroutine to have picked a path through that node to keep, in which case slightly more mass than has now converged on the initial segment of the new nodes at $n_{\alpha,s+1}$. In the worst case, $N_{\alpha}$ may act only once between injuries, each of which keeps alive the only path that $N_{\alpha}$ has added mass to. This will concentrate all the mass that $N_{\alpha}$ has to distribute onto a single path. For this reason we increment $m_{\alpha,s}$ by $1$ every time $N_{\alpha}$ is injured. We note that this does not affect the amount of mass that $N_{\alpha}$ has left to distribute; once we see that $\gamma\neq\restr{\Omega}{p}$ we know that neither extension of $\gamma$ is $\restr{\Omega}{p+1}$. The difference in mass that $N_{\alpha}$ can have added to paths before an injury is always at most $2^{-m_{\alpha,s}+d}$, since it always seeks to distribute the mass evenly, and it does so in quanta of $2^{-m_{\alpha,s}+d}$. Thus, even if injuries are selecting the paths with slightly higher than average mass, no more than $\sum_s 2^{-m_{\alpha,s}+d}$ can accumulate on a path above the average of $2^{-d}$. This sum is bounded by $2^d\cdot 2^{-m_{\alpha,s_0}+1}$, where $s_0$ is the stage at which $N_{\alpha}$ is initialized. Since we always choose our first $m_{\alpha,s_0}$ to be larger than $2d+1$, this term is less than $2^{-d}$. Thus, the total amount that $N_{\alpha}$ can have contributed to a path is $2^{-d+1}$, and so the total mass in $M$ for any oracle is bounded by $\sum_d 2^{-d+1}=2$.

\end{proof}

Since $M$ is a legitimate oracle Kraft-Chaitin set and the $N_{\alpha}$ requirements are satisfied for all correct $\alpha$, there will be an infinite path $A$ through $T$ that is not low for $\Omega$. $A$ will be the path that always guesses correctly as to the behavior of $\phies$ at the $e$th guessing node, and between guessing nodes follows the path through the compression intervalfor which $M$ has a short description of an actual initial segment of $\Omega$. 

All that remains to be shown is that this path is actually in $\LK(\deltwo)$, that is, that the sets $L_e$ that we construct actually witness the existence of machines ensuring that $K(\s)\leq^{+} K^{A}(\s)+\phi_e(\s)$ for all $\s$, for all paths through $T$ that guess that $\phi_e$ is finite-to-one approximable. This is the most complicated part of the proof, since the mass paid into $L_e$ can be wasted by injuries to the construction.

\begin{lemma}
For every $e$, $\sum\limits_{(\s,n)\in L_e}2^{-n}<1$.
\end{lemma}

\begin{proof}
We consider separately mass that is put into $L_e$ by the actions of each $S^e_i$ requirement. First, we fix an $e$ and $i$ in $\N$ with $i\geq e$. We note that we do not require $\phies$ to be a finite-to-one approximation; in the event that it is not, actions of $S^e_i$ strategies may cause infinitely many injuries to the part of the tree that guesses that it is, and then $L_e$ will witness that the finitely many infinite branches on this part of the tree are all low for $K$.


Now, $S^e_i$ requires attention if for some $\s$ for which it has $e$-responsibility, it sees a new shorter description of $\s$ converge on some path $\gamma$ through $T^e_{s}$ (let us take $\gamma$ minimal to cause this). However, $S^e_i$ only puts a request into $L_e$ if $\gamma$ goes through no more than $i+1$ guessing nodes, and otherwise it causes an injury. Therefore, when examining the mass contributed by $S^e_i$ it suffices to consider the finite initial segment of $T_{s}$ given by 

$$T_{i,s}=\{ \s \in T_s: \s \text{ goes through no more than } i+1 \text{-many guessing nodes}\}$$

We only put requests into $L_e$ is response to computations converging on parts of the tree that are either below the $e$th guessing level, or follow the `$1$' path at the $e$th guessing node itself, but for getting a rough upper bound on the mass of $L_e$, we can ignore this and consider the full subtree $T_{i,s}$.

The subtree $\Tis$ contains the guessing nodes at $n_{\alpha,s}$ for every $\alpha\in\twow$ with $|\alpha|\leq i+1$, and all the compression intervals between these nodes. For a given $\alpha$ with $|\alpha|\leq i+1$, for each minimal $\eta$ that follows $\alpha$ through $\Tis$, there are $2^{2|\alpha|}$ branches in the compression interval above $\eta$ that reach level $n_{\alpha,s}$. 

We will consider these branches as `reservoirs' of mass, and descriptions converging using one of these branches as an oracle as mass getting added to the corresponding reservoir. If the use of a computation is exactly one of these minimal $\eta$'s, we can consider that much mass being added to \emph{each} of $\eta$'s reservoirs. An injury will cause many of these branches to be killed, so the mass will be spilled out, but the most massive branch will be kept alive and, after enough $R$ requirements act again, the mass from that branch will end up in a lower reservoir for a shorter $\alpha'$.  It is important to note that for any $\alpha$ the reservoirs corresponding to $\alpha$ are all end extensions of reservoirs for $\alpha^{-}$, the immediate predecessor of $\alpha$, and so the total mass in any collection of nested reservoirs must be no more than $1$.

The number of reservoirs associated with an $\alpha$ is fixed throughout the construction, although we may have to wait for $R$ requirements to act to replace reservoirs that were emptied. For $\alpha$ with $|\alpha|=1$, there are $4$ reservoirs, since the compression interval has length $2$. Each of these has reservoirs above it for some $\alpha$ of length $2$, and the compression interval for these will have length $4$, so there will be $4\cdot2^{4}=64$ of these. In general, for $|\alpha|=i$, there will be 

$$\prod\limits_{j=1}^{i} 2^{2j}=2^{\sum\limits_{j=1}^{i} 2j}=2^{i^2+i}$$

reservoirs at level $i$.

When considering the contributions of $\Sei$ to $L_e$ we can consider only the reservoirs at level $i$. For any descriptions that converge lower in the tree $\Tis$ for $\s$ that $\Sei$ has $e$-responsibility for, we can instead put the corresponding amount of mass into each of the $i$-level reservoirs above the actual use of the computation, since this has the same effect on the subsequent amount of mass that can be put into these reservoirs. Then to attain a rough bound on the amount of mass that $\Sei$ puts into $L_e$, we can make the simplifying assumption that $S^e_i$ will pay for all the mass that passes through the reservoirs at level $i$, at a rate of $2^{-c_i}$ (the largest this can be without causing an injury). Now all that remains is to find a bound for the amount of mass that can pass through these reservoirs. 

As we said above, any injury to a relevant $R_{\alpha}$ will spill the mass from all but the most massive reservoir, and pour this saved mass into a reservoir below. Thus, in the worst, impossible, case, $\Sei$ could have to pay for all the $i$-level reservoirs being filled with $1$ total mass each, then an injury could empty all but one of these, and pass that $1$ down to the $i-1$ level, and this could repeat till all the $i-1$-level reservoirs are full. Then an injury could pour out all but one of these and fill one of the $i-2$-level reservoirs, and this larger process could repeat till all the $i-2$-level reservoirs are filled. Continuing like this, $\Sei$ could be forced to pay for the mass that is used to fill \emph{all} the reservoirs at all levels up to and including the $i$th one, while spilling as much as possible at every step. For each $1$st level reservoir, we would have to fill every $2$nd level one, and for each of these we would have to fill every $3$rd level one, continuing until level $i$. This gives us that the mass required, just to fill all the $1$st level reservoirs is

$$\prod_{j=1}^{i}2^{j^2+j},$$

and thus to fill all the reservoirs up to level $i$ would require

$$\sum_{k=1}^{i}\prod_{j=k}^{i}2^{j^2+j}$$

much mass to pass through the $i$-level reservoirs. 

We approximate a very rough upper bound for this:

\begin{align*}
\sum_{k=1}^{i}\prod_{j=k}^{i}2^{j^2+j} &\leq \sum_{k=1}^{i}\prod_{j=1}^{i}2^{j^2+j} \leq i\cdot 2^{\sum_{j=1}^{i}j^2+j}\\
&\leq 2^i \cdot 2^{[(i^2+i)(2i+1)/6]+[(i^2+i)/2]} \leq 2^{i+(i^2+i)(2i+2)}\\
&\leq 2^{2i^3+4i^2+3i}
\end{align*}

Thus, the amount of mass the $\Sei$ puts into $L_e$ is bounded by $2^{2i^3+4i^2+3i}\cdot 2^{-c_i}$. Since we have taken $c_i=10i^4$, this is the same as $2^{2i^3+4i^2+3i-10i^4}$, which is always no more than $2^{-i}$. Since this is the contribution for each $S_{e,i}$, the total mass of requests that go into $L_e$ for all $i$ is less than $\sum_i 2^{-i} =1$. 
\end{proof}

This completes the proof of Theorem~\ref{lkdeltwonotweaklowk}

\end{proof}

\section{Further Questions}

Theorem~\ref{lkdeltwonotweaklowk} and the result of Barmpalias and Lewis \cite{barmlewislkdegrees} show that not every $\deltwo$-bounded low for $K$ real has a countable lower $\leq_{LK}$-cone, but the question of which of these reals do have a countable lower $\leq_{LK}$-cone remains open. At present, the only reals we know are in the intersection are the $K$-trivials themselves. 
\begin{?}
Can we characterize the reals that are both $\deltwo$-bounded low for $K$ and weakly low for $K$(i.e., that have countable lower $\leq_{LK}$-cones? Are there any that are not $K$-trivial?
\end{?}

The analysis carried out in this paper was entirely in terms of prefix-free Kolmogorov complexity, but there are analogous notions in terms of plain Kolmogorov complexity (where the domains of decoding machines are not required to be prefix-free) that can also be weakened to $\deltwo$-bounded versions. In the case of $C$-triviality and lowness for $C$, we know by results of Chaitin \cite{chaitin} that these notions coincide with each other and contain only the recursive sets. So far we know nothing about the $\deltwo$-bounded versions. 
\begin{?}
What can we say about $\deltwo$-bounded lowness for $C$ or $C$ triviality?
\end{?}

Finally, we have considered here reals with initial segment complexity or compressive power bounded by \emph{all} $\deltwo$ orders. It  may be interesting to consider the internal structure of the various bounded notions, i.e., $\LK(f)$ and $\KT(g)$ for various $f$ and $g$. Many of the results for the $\deltwo$-bounded notions carry over trivially, for instance, the cofinality in the Turing degrees of $\KT(g)$ for any $\deltwo$ order $g$, but the theorems in Section 3 do not necessarily carry over, as the proofs depended on applying bounded initial segment complexity for different bounds. Clearly for some choices of $f$ we have $\LK(f)=2^{\omega}$ and for others it is much smaller (and similarly with $\KT(g)$), but it is open whether there is a strict cutoff between the two cases.

\begin{?}
What can we say about the sets $\LK(f)$ and $\KT(g)$ for single $\deltwo$ orders $f$ and $g$? What is the structure of these sets under $\leq_K$ or $\leq_LK$?
\end{?}


\bibliographystyle{acm}
\bibliography{weaklowcomp}{}

\end{document}